\newtheorem{remark}{Remark}
\newtheorem{theorem}{Theorem}
\newtheorem{definition}{Definition}
\newtheorem{assumption}{Assumption}
\newtheorem{lemma}{Lemma}
\newtheorem{prop}{Proposition}
\begin{document}

\title{\LARGE \bf {Data-driven identification of reaction-diffusion dynamics from finitely many non-local noisy measurements by exponential fitting}}
\author{Rami Katz, Giulia Giordano and Dmitry Batenkov
\thanks{R. Katz ({\tt\small ramkatsee@gmail.com}) and G. Giordano ({\tt\small   giulia.giordano@unitn.it}) are with the Department of Industrial Engineering, University of Trento.  }
\thanks{D. Batenkov ({\tt\small dbatenkov@tauex.tau.ac.il}) is with the Department of Applied Mathematics, Tel Aviv University, Israel.}
\thanks{The work of R. Katz and G. Giordano was funded by the European Union through the ERC INSPIRE grant (project number 101076926); views and opinions expressed are however those of the authors only and do not necessarily reflect those of the European Union or the European Research Council; neither the European Union nor the granting authority can be held responsible for them. D.~Batenkov was supported by Israel Science Foundation Grant 1793/20.}
}

\maketitle

\begin{abstract}
Given a reaction-diffusion equation with unknown right-hand side, we consider a nonlinear inverse problem of estimating the associated leading eigenvalues and initial condition modes from a finite number of non-local noisy measurements. We define a reconstruction criterion and, for a small enough noise, we prove the existence and uniqueness of the desired approximation and derive closed-form expressions for the first-order condition numbers, as well as bounds for their asymptotic behavior in a regime when the number of measured samples is fixed and the inter-sampling interval length tends to infinity. When computing the sought estimates numerically, our simulations show that the exponential fitting algorithm ESPRIT is first-order optimal, as its first-order condition numbers have the same asymptotic behavior as the analytic condition numbers in the considered regime.
\end{abstract}
\begin{keywords}
Identification, Distributed parameter systems, Data-driven control, Estimation
\end{keywords}

\section{Introduction and Considered Model}
Reaction-diffusion equations (RDEs) are widely used to model phenomena in physics and engineering, including magnetized plasma, flame front propagation and chemical processes \cite{sivashinsky1977nonlinear,nicolaenko1986some}.  RDEs belong to the class of distributed parameters systems, and their control and observation have been investigated over the last decades, see e.g. \cite{balas1988finite,harkort2011finite,katz2022delayed}. In particular, observation and control of RDEs through modal decomposition was employed e.g. in \cite{christofides2001,curtain1982finite,katz2021finite111}. Almost all existing control and observation techniques assume explicit knowledge of the spatial operator of the system or of the eigenvalue/eigenfunction pairs corresponding to its modes.

Identification of unknown parameters in RDEs is a challenging problem, which has been mostly studied in an adaptive estimation framework \cite{demetriou1994dynamic,banks2012estimation}. Adaptive estimation relies on a persistency of excitation assumption, which may be difficult to verify in practice. It also requires continuous-time measurements of the state and has not been generalized so far to a sampled-data framework and/or to estimation from a \emph{finite} number of measurements. Finally, translation of these theoretical methods into tractable and efficient algorithms is, to the best of our knowledge, still an open problem. Other identification methods, which are accompanied by sound numerical algorithms, have been derived in the field of inverse problems \cite{lowe1992recovery,rundell1992reconstruction,kirsch2011introduction}. These approaches treat the problem of recovering the spatial operator of the system under the assumption of \emph{complete knowledge} of its eigenvalues. However, this assumption is non-realistic from a control theory perspective, since often only discrete-time measurements of the state are available. Hence, constructive and implementable data-driven identification techniques for reaction-diffusion equations are still missing.

We consider the 1D reaction-diffusion equation  
\begin{small}
\begin{equation}\label{eq:PDE}
\begin{array}{lll}
&\hspace{-2mm}z_t(x,t)= \left(p(x)z_x(x,t) \right)_x+q(x)z(x,t),\ z(0,t) = z(1,t) = 0,
\end{array}
\end{equation}
\end{small}

subject to non-local measurements

\begin{small}
\begin{equation}\label{eq:Meas}
 y(t) = \int_0^1c(x)z(x,t)dx\in \mathbb{R},\quad t \geq 0.   
\end{equation}
\end{small}

Here $c\in L^2(0,1)$ is \underline{partially known} (see Assumption $1.2$ in Section \ref{Sec:IDObject}), $x\in (0,1)$ and $z(x,t)\in \mathbb{R}$. The smooth functions $p(x)$ and $q(x)$ and the initial condition $z(\cdot,0)$ are assumed \underline{unknown}. The system \eqref{eq:PDE} has an associated sequence of eigenvalues $\left\{\lambda_n\right\}_{n=1}^{\infty}$ with eigenfunctions $\left\{\psi_n \right\}_{n=1}^{\infty}$ (see Section \ref{Sec:Prelim}). The identification objective considered in this paper is the approximation of the leading eigenvalues $\left\{\lambda_n\right\}_{n=1}^{N_0},\ N_0\in \mathbb{N}$, and the initial condition modes from the available measurements, subject to appropriate assumptions (see Section \ref{Sec:IDObject}). The contribution of the paper is as follows:
\begin{enumerate}
    \item Differently from existing adaptive estimation algorithms, which require measurements of the form $y(t)$, $t\geq 0$, or $\left\{y(s_k)\right\}_{k=1}^{\infty}$ with $\lim_{k\to \infty}s_k=\infty$, we assume that the measurements \eqref{eq:Meas} are available at \underline{finitely many} time steps. Moreover, $c$ in \eqref{eq:Meas} is not a perfect filter (meaning, $c\notin \operatorname{span}\left\{\psi_n \right\}_{n=1}^{N_1}$) and introduces structured noise into the measurements, with intensity $\epsilon$, which emanates from measuring `undesirable' system modes.
    \item We define a reconstruction criterion in the presence of structured noise, and prove the existence and uniqueness of the associated approximation, provided $\epsilon>0$ is not too large (see Theorem \ref{Prop:IFT}). 
    \item We introduce first-order condition numbers, in \eqref{eq:EpsExpans}, which describe how the $\epsilon$-noise is amplified in the reconstruction errors, and provide explicit expressions for these condition numbers, as well their asymptotic behavior in the specific regime described in \eqref{eq:Regimes}.
    \item Finally, we consider the problem of numerically computing the approximations. The parameter identification problem turns out to be a special case of \emph{exponential fitting}, a classical topic in data analysis with numerous applications \cite{istratov1999a,pereyra2010,batenkov2021a}. Our numerical simulations show that the well-known ESPRIT algorithm \cite{roy1989esprit} achieves first-order optimality, meaning that the first-order condition numbers of the ESPRIT algorithm exhibit the same asymptotic behaviour as the analytic condition numbers, in the considered regime. 
\end{enumerate}
We believe that our results pave the way towards new directions in data-driven identification of RDEs.

\section{Preliminaries}\label{Sec:Prelim}
Consider the system \eqref{eq:PDE}, where the \emph{unknown} smooth functions  satisfy the bounds
\begin{small}
\begin{equation}\label{eq:pbounds}
\begin{array}{lll}
&0<\underline{p}\leq p(x) \leq \overline{p} <\infty,\ \underline{q}\leq q(x) \leq \overline{q}, \quad  x\in [0,1].
\end{array}
\end{equation}
\end{small}

The constants $\underline{p}, \underline{q},\overline{p}, \overline{q}$ are assumed to be \emph{unknown}. We denote by $\mathcal{H}^2(0,1)$ (resp. $\mathcal{H}_0^1(0,1)$) the Sobolev space of functions $f$ defined on $[0,1]$ that are twice (resp. once) weakly differentiable with $f''\in L^2(0,1)$ (resp. $f'\in L^2(0,1)$ and $f(0)=f(1)=0$). Define the operator $\mathcal{A}$ 
\begin{small}
\begin{equation*}
\begin{array}{lll}
&\left[\mathcal{A}h\right](x) = -\left(p(x)h'(x) \right)'-q(x)h(x),\quad x\in(0,1),\\
& \operatorname{Dom}(\mathcal{A}) = \left\{h\in \mathcal{H}^2(0,1) \ ; \ h(0)=h(1)=0 \right\}.
\end{array}
\end{equation*}
\end{small}

The operator $\mathcal{A}$ has an infinite monotone sequence of simple eigenvalues $\left\{\lambda_n\right\}_{n=1}^{\infty}$ satisfying $\lim_{n\to \infty}\lambda_n=\infty$ \cite{orlov2017general}. The eigenvectors $\left\{\psi_n \right\}_{n=1}^{\infty}$ form a complete orthonormal system in $L^2(0,1)$. Also, they satisfy the inequalities \cite{orlov2017general}
\begin{small}
 \begin{equation}\label{eq:EigBounds}
   \pi^2n^2\underline{p}+\underline{q} \leq \lambda_n \leq  \pi^2 n^2 \overline{p}+\overline{q},\quad n\in \mathbb{N}.
 \end{equation}
\end{small}

Well-posedness of system \eqref{eq:PDE} has been studied thoroughly \cite{katz2021finite}. In particular, given $z(\cdot,0)\in L^2(0,1)$, system \eqref{eq:PDE} has a unique solution $z\in C([0,\infty), \mathcal{H}^1_0(0,1))\cap C^1((0,\infty), \mathcal{H}^1_0(0,1))$ such that $z(\cdot,t )\in \operatorname{Dom}(\mathcal{A})$ for all $t>0$.  We present the solution to \eqref{eq:PDE} as
\begin{equation}\label{eq:Series}
z(x,t) = \sum_{n=1}^{\infty}z_n(t)\psi_n(x),\quad z_n(t) = \left<z(\cdot,t),\psi_n \right>, \ n\in \mathbb{N}.
\end{equation}
Differentiating under the integral sign and integrating by parts, we have $\dot{z}_n(t) = -\lambda_nz_n(t)$ for all $n\in \mathbb{N}$, whence
\begin{small}
\begin{equation}\label{eq:Series1}
z(x,t) = \sum_{n=1}^{\infty}z_n(0)e^{-\lambda_nt}\psi_n(x).
\end{equation}
\end{small}

Henceforth, we adopt the notation $[n] = \{i \in \mathbb{N} \ ; \ 1 \leq i \leq n \}$.
\begin{prop}\label{prop:Differencebounds}
There exist constants $\upsilon,\Upsilon>0$ such that
\begin{small}
\begin{equation}\label{eq:EigDiff}
  \upsilon \left(m^2-n^2 \right) \leq \lambda_m-\lambda_n\leq \Upsilon \left(m^2-n^2 \right)
\end{equation}
\end{small}
holds for every choice of $1\leq n < m$.
\end{prop}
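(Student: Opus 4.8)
The plan is to reduce the two-sided bound on the general gap $\lambda_m-\lambda_n$ to a uniform two-sided bound on the \emph{consecutive} gaps $\lambda_{k+1}-\lambda_k$, and then telescope. Since the eigenvalues are simple and strictly monotone, $\lambda_m-\lambda_n=\sum_{k=n}^{m-1}(\lambda_{k+1}-\lambda_k)$, while $\sum_{k=n}^{m-1}(2k+1)=m^2-n^2$. Hence, if I can produce constants $c_1,c_2>0$ with $c_1(2k+1)\le \lambda_{k+1}-\lambda_k\le c_2(2k+1)$ for every $k\ge 1$, then summing over $k$ from $n$ to $m-1$ immediately yields \eqref{eq:EigDiff} with $\upsilon=c_1$ and $\Upsilon=c_2$. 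So the entire problem collapses to the single-step gap estimate.

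The first thing I would check is whether the elementary bounds \eqref{eq:EigBounds} suffice, and they do not: subtracting them gives $\lambda_{k+1}-\lambda_k\le \pi^2\overline{p}(k+1)^2-\pi^2\underline{p}k^2+(\overline{q}-\underline{q})$, which grows like $k^2$ when $\overline{p}\neq\underline{p}$, far too weak for a gap that should grow only like $k$; the induced lower bound is similarly polluted by the mismatch between $\overline{p}$ and $\underline{p}$ and by the constants $\overline{q},\underline{q}$. The genuine input I need is the sharp Sturm--Liouville asymptotics for $\mathcal{A}$. Applying the Liouville transformation $\xi=\int_0^x p(s)^{-1/2}\,ds$, with $L:=\int_0^1 p(s)^{-1/2}\,ds$, brings the eigenvalue problem to the normal form $-v''+Q\,v=\lambda v$ on $[0,L]$ with Dirichlet conditions, whose eigenvalues obey the classical expansion $\sqrt{\lambda_n}=\frac{n\pi}{L}+O(1/n)$, equivalently $\lambda_n=a\,n^2+O(1)$ with $a=\pi^2/L^2>0$. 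I would cite this expansion---standard for the smooth, uniformly elliptic coefficients of \eqref{eq:pbounds}---rather than reprove it.

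Given $\lambda_n=a\,n^2+b_n$ with $\sup_n|b_n|<\infty$, the consecutive gap is $\lambda_{k+1}-\lambda_k=a(2k+1)+(b_{k+1}-b_k)$, so $\frac{\lambda_{k+1}-\lambda_k}{2k+1}=a+\frac{b_{k+1}-b_k}{2k+1}\to a$ as $k\to\infty$, the numerator being bounded. Thus for all $k$ beyond some index $K$ the ratio lies in, say, $[a/2,\,2a]$; for the finitely many $k<K$ the gap is a strictly positive number (eigenvalues are simple and increasing) and $2k+1>0$, so the ratio is bounded between positive constants there as well. Taking $c_1,c_2$ to be the global infimum and supremum of these ratios closes the single-step estimate and hence the proof. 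The main obstacle is therefore not the telescoping, which is elementary, but obtaining a gap that is genuinely \emph{linear} in $k$: the bounds \eqref{eq:EigBounds} cannot detect this, and one must invoke the finer spectral asymptotics via the Liouville normal form to pin down the leading $a(2k+1)$ behavior of $\lambda_{k+1}-\lambda_k$.
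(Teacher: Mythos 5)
Your proof is correct, and it takes a genuinely different route from the paper's. You telescope $\lambda_m-\lambda_n=\sum_{k=n}^{m-1}(\lambda_{k+1}-\lambda_k)$ against $m^2-n^2=\sum_{k=n}^{m-1}(2k+1)$, reducing the whole statement to a uniform two-sided bound on consecutive gaps, which you get from the sharp asymptotics $\lambda_n=an^2+O(1)$ for large $k$ and from simplicity and strict monotonicity of the spectrum for the finitely many small $k$. The paper instead bounds the ratio $(\lambda_m-\lambda_n)/(m^2-n^2)$ directly and splits into three cases: both indices larger than some $A$ (handled by the same asymptotic expansion, cited from Fulton), both indices at most $A$ (finitely many pairs, so the constants are adjusted), and the mixed case $n\leq A<m$, which it resolves by a contradiction argument using only the crude bounds \eqref{eq:EigBounds}, since for $m\to\infty$ the ratio is eventually bounded below by roughly $\pi^2\underline{p}$. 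Your telescoping eliminates the mixed case entirely, which is cleaner; the price is that you must establish that each consecutive gap is genuinely linear in $k$, and --- as you correctly observe --- this cannot be extracted from \eqref{eq:EigBounds} and forces the same appeal to refined Sturm--Liouville asymptotics that the paper also makes. Both arguments thus rest on the identical external input; yours packages it more economically, while the paper's case analysis illustrates that once the ``both large'' regime is secured, the elementary bounds suffice to glue the regimes together.
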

\begin{proof}
We first show that \eqref{eq:EigDiff} holds for $A\leq n< m$, with some $A\in \mathbb{N}$. By \cite[Equation 4.21]{fulton1994eigenvalue}, the eigenvalues $\left\{\lambda_n\right\}_{n=1}^{\infty}$ have the asymptotic behavior $\lambda_n = \frac{\pi^2}{B^2}n^2+a_0+O\left(\frac{1}{n^2}\right)$, $n\geq 1$, where $B$ and $a_0$ are \emph{positive} constants. Hence, $\lambda_m-\lambda_n \geq  \frac{\pi^2}{2B^2}\left(m^2-n^2 \right)+\frac{\pi^2}{2B^2} + O\left(\frac{1}{A^2} \right)$,
which implies the lower bound in \eqref{eq:EigDiff}, for large enough $A$. The upper bound is proved via similar arguments.
Next, given $A$, it is clear that by increasing $\Upsilon$ and decreasing $\nu$, we can further guarantee \eqref{eq:EigDiff} for $1\leq n <m\leq A$. We show that $\Upsilon$ and $\upsilon$ can be tuned such that \eqref{eq:EigDiff} holds for $1\leq n\leq A$ and $m>A$. Assume $\nu$ cannot be found such that \eqref{eq:EigDiff} holds. Then, for any $q\in \mathbb{N}$, there exist $n_q\leq A, \ m_q>A$ such that
\begin{small}
\begin{equation*}
2^{-q}>\frac{\lambda_{m_q}-\lambda_{n_q}}{m_q^2-n_q^2}\overset{\eqref{eq:EigBounds}}{\geq} \frac{\pi^2\left(\underline{p}m_q^2 -\overline{p}A^2\right)+\underline{q}-\overline{q}}{m_q^2-1}.
\end{equation*}
\end{small}
By $\lim_{q \to \infty}m_q= \infty$ we get a contradiction. Similar arguments hold for $\Upsilon$.
\end{proof}

For $\chi=\left\{\chi_n \right\}_{n=1}^{S}\subseteq \mathbb{C}$, the Lagrange interpolation basis is
\begin{small}
\begin{equation}\label{eq:Lagrange}
    L_{\chi,n}(z) =\prod_{j\neq n}\frac{z-\chi_{j}}{\chi_n-\chi_{j}},\quad n \in [S],
\end{equation}
\end{small}
and the corresponding Hermite interpolation basis is
\begin{small}
\begin{equation}\label{eq:Hermite}
\begin{array}{lll}
&H_{\chi,n}(z) = \left[ 1-2(z-\chi_{n})L_{\chi,n}'(\chi_{n})\right] L_{\chi,n}^2(z) ,\\
&\widetilde{H}_{\chi,n}(z) = (z-\chi_{n})L_{\chi,n}^2(z), \quad n\in [S].
\end{array}
\end{equation}
\end{small}
The Hermite basis contains polynomials of degree at most $2S-1$. For a polynomial $q(z) = \sum_{j=0}^{S}a_jz^j$, we introduce the coordinate map $\mathfrak{C}(q) =  \operatorname{col}\left\{ a_j\right\}_{j=0}^{S}$.  The Hermite matrix $\mathbb{H}_S\left(\chi\right)  = \left( \operatorname{row}\left\{\begin{bmatrix} \mathfrak{C}(H_{\chi,n})&  \mathfrak{C}(\tilde{H}_{\chi,n}) \end{bmatrix}\right\}_{n=1}^S \right)^{\top}\in \mathbb{R}^{2S\times 2S}$ is the unique matrix satisfying 
\begin{small}
\begin{equation}\label{eq:Hermite1}
\mathbb{H}_S\left(\chi\right) \operatorname{col}\left\{ \zeta^j \right\}_{j=0}^{2S-1} = \operatorname{col}\left\{\begin{bmatrix}
H_{\chi,n}(\zeta)\\
\tilde{H}_{\chi,n}(\zeta)
\end{bmatrix} \right\}_{n=1}^S.
\end{equation}
\end{small}

\begin{prop}\label{prop:Ical}
Let $ 0\leq w_1< w_2\leq \infty$ and $\mathcal{Q}_{\alpha}(x) = -\log \left(1-e^{-\alpha x} \right)$ with $x>0$ and $\alpha>0$. The integrals  
\begin{small}
\begin{equation}\label{eq:CalInteg}
\mathcal{J}_{w_1,w_2}(\alpha) := \int_{w_1}^{w_2} \mathcal{Q}_{\alpha}(x)\mathrm{d}x>0
\end{equation}
\end{small}
are finite and decreasing, and $\lim_{\alpha \to \infty} \mathcal{J}_{w_1,w_2}(\alpha)=0$.
\end{prop}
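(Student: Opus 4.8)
The plan is to reduce all three claims to a pointwise analysis of the integrand $\mathcal{Q}_\alpha$, after which positivity and monotonicity are immediate and the remaining assertions follow from elementary integrability estimates at the endpoints together with dominated convergence.

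First I would dispose of positivity and finiteness. For $x > 0$ and $\alpha > 0$ we have $0 < e^{-\alpha x} < 1$, hence $0 < 1 - e^{-\alpha x} < 1$ and therefore $\mathcal{Q}_\alpha(x) = -\log(1 - e^{-\alpha x}) > 0$; since the integrand is continuous and strictly positive on $(w_1, w_2)$, the integral $\mathcal{J}_{w_1,w_2}(\alpha)$ is strictly positive. For finiteness the only points requiring attention are the endpoints that can make the interval or the integrand singular, namely $x \to 0^+$ (when $w_1 = 0$) and $x \to \infty$ (when $w_2 = \infty$). Near $x = 0$ the expansion $1 - e^{-\alpha x} = \alpha x + O(x^2)$ shows $\mathcal{Q}_\alpha(x) = -\log(\alpha x) + o(1)$, so the singularity is merely logarithmic and hence integrable. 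Near $x = \infty$, using $-\log(1 - u) \leq 2u$ for $u \in (0, 1/2]$ with $u = e^{-\alpha x}$ gives $\mathcal{Q}_\alpha(x) \leq 2 e^{-\alpha x}$ for all large $x$, which is integrable at infinity. Together these bounds yield finiteness of $\mathcal{J}_{w_1,w_2}(\alpha)$ for every admissible pair $(w_1, w_2)$.

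Next I would establish monotonicity in $\alpha$. For fixed $x > 0$, differentiation gives $\partial_\alpha \mathcal{Q}_\alpha(x) = -\frac{x e^{-\alpha x}}{1 - e^{-\alpha x}} < 0$, so $\alpha \mapsto \mathcal{Q}_\alpha(x)$ is strictly decreasing at every $x$. Consequently, for $\alpha_1 < \alpha_2$ the inequality $\mathcal{Q}_{\alpha_1}(x) > \mathcal{Q}_{\alpha_2}(x)$ holds throughout $(w_1, w_2)$, and integrating preserves it, whence $\mathcal{J}_{w_1,w_2}(\alpha_1) > \mathcal{J}_{w_1,w_2}(\alpha_2)$; that is, $\mathcal{J}_{w_1,w_2}$ is decreasing.

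Finally, for the limit I would note that for each fixed $x > 0$ we have $e^{-\alpha x} \to 0$ as $\alpha \to \infty$, so $\mathcal{Q}_\alpha(x) \to -\log 1 = 0$ pointwise. To move the limit inside the integral I would apply dominated convergence: fixing any $\alpha_0 > 0$, the monotonicity just proved gives $0 < \mathcal{Q}_\alpha(x) \leq \mathcal{Q}_{\alpha_0}(x)$ for all $\alpha \geq \alpha_0$, and $\mathcal{Q}_{\alpha_0}$ is integrable on $(w_1, w_2)$ by the finiteness step. Hence $\lim_{\alpha \to \infty} \mathcal{J}_{w_1,w_2}(\alpha) = \int_{w_1}^{w_2} 0 \, \mathrm{d}x = 0$. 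The one place demanding care is the uniform control of the integrand near the endpoints; I expect this to be the main technical obstacle, but it is resolved cleanly because the monotonicity in $\alpha$ furnishes a single integrable dominating function valid for all large $\alpha$, so no endpoint-by-endpoint limiting argument is needed.
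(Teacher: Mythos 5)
Your proof is correct, but it takes a genuinely different route from the paper's. The paper works out only the representative case $w_1=1$, $w_2=\infty$ (stating that the others are similar): it proves finiteness by an integration by parts, and obtains the limit $\mathcal{J}_{1,\infty}(\alpha)\to 0$ by splitting $\mathcal{J}_{1,\infty}(\alpha)\le \mathcal{J}_{1,M}(\alpha)+\mathcal{J}_{M,\infty}(1)$, killing the tail by choosing $M$ large, and then invoking Dini's theorem for uniform convergence of $\mathcal{Q}_\alpha$ on the compact interval $[1,M]$. You instead establish finiteness by direct asymptotics at the two possible singular endpoints (the logarithmic blow-up $\mathcal{Q}_\alpha(x)\sim -\log(\alpha x)$ as $x\to 0^+$ and the exponential decay $\mathcal{Q}_\alpha(x)\le 2e^{-\alpha x}$ at infinity), and you get the limit by dominated convergence, using the pointwise monotonicity in $\alpha$ to supply the single integrable dominating function $\mathcal{Q}_{\alpha_0}$. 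Your version has two advantages: it covers all admissible pairs $(w_1,w_2)$ uniformly, including the endpoint case $w_1=0$ that the paper leaves implicit and that actually requires the logarithmic-singularity estimate, and it replaces the two-step split-plus-Dini argument with one application of a standard convergence theorem (monotone convergence would also do, given the monotonicity you proved). The paper's approach is more hands-on and avoids citing measure-theoretic machinery, but is otherwise not shorter. No gaps.
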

\begin{proof}
We prove the result for $w_1=1$ and $w_2=\infty$; other cases are similar. Integrating by parts, we have
\begin{small}
\begin{equation*}
\mathcal{J}_{1,\infty}(\alpha) = \left[-x\log\left(1-e^{-\alpha x} \right) \right]_1^{\infty}+\alpha \int_1^{\infty}\frac{x}{1-e^{-\alpha x}}e^{-\alpha x}\mathrm{d}x.
\end{equation*}
\end{small}
Since $\lim_{x\to \infty}x\log\left(1-e^{-\alpha x} \right)  = 0$ and the integral on the right-hand side converges,
we have that $\mathcal{J}_{1,\infty}(\alpha)<\infty$. Given $x\in (0,\infty)$,  $(0,\infty)\ni \alpha \mapsto -\log \left(1-e^{-\alpha x} \right)\in (0,\infty)$ is decreasing, whence $\mathcal{J}_{1,\infty}(\alpha)$ is decreasing too. Let $\epsilon>0$, $\alpha>1$ and $M>1$. Then, $\mathcal{J}_{1,\infty}(\alpha) \leq \mathcal{J}_{1,M}(\alpha)+ \mathcal{J}_{M,\infty}(1)$. 
Choosing $M$ so that the rightmost term is smaller than $\frac{\epsilon}{2}$ and then, by Dini's theorem, $\alpha_*$ such that  $\mathcal{J}_{1,M}(\alpha)<\frac{\epsilon}{2}$ for $\alpha>\alpha_*$, we have that  $\alpha>\min (1,\alpha_*)$ implies
$0<\mathcal{J}_{1,\infty}(\alpha)<\epsilon$.
\end{proof}


\section{The Identification Objective}\label{Sec:IDObject}

\subsection{Measurements and standing assumptions}
Considering system \eqref{eq:PDE} subject to the non-local measurements \eqref{eq:Meas}, we
substitute \eqref{eq:Series1} into \eqref{eq:Meas} to obtain 
\begin{small}
\begin{equation}\label{eq:Meas1}
 y(t) = \sum_{n=1}^{\infty}c_nz_n(0)e^{-\lambda_nt},\quad c_n = \left<c,\psi_n \right>,\ n\in \mathbb{N}.  
\end{equation}
\end{small}
\begin{remark}
We \emph{do not assume} that the solution to \eqref{eq:PDE} is exponentially stable. In fact, for $\left|\overline{q} \right|>0$ large, the solution \eqref{eq:Series1} may contain (finitely) many unstable modes.
\end{remark}

Before formally stating our identification objective, we present our main assumptions on system \eqref{eq:PDE} and measurements \eqref{eq:Meas}. Let there exist $N_1, N_2\in \mathbb{N}$ such that the following properties hold.

\begin{assumption}\label{assum:coeff}
The coefficients $\left\{ c_n \right\}_{n=1}^{\infty}$ satisfy
\begin{enumerate}
    \item   $c_n = 0$ for all $n>N_1+N_2$,
    \item   $c_n$ are \emph{known and nonzero} for $n\in [N_1]$, 
    \item   $ \frac{|c_k|}{|c_n|}\leq M_c \epsilon$ for all $n\in [N_1], k\in [N_1+N_2]\setminus [N_1]$
\end{enumerate}
for some $\epsilon>0$ and  $M_c>0$.
We define
\begin{small}
\begin{equation*}
\tilde{c}_k := \frac{c_k}{\epsilon},\ k\in [N_1+N_2]\setminus [N_1].
\end{equation*}
\end{small}
\end{assumption}

\begin{assumption}\label{assum:incon}
The initial condition $z(\cdot,0) \in L^2(0,1)$ is \emph{unknown} and satisfies $z_n(0)\neq 0$ for $n\in [N_1]$. Furthermore, $   \frac{|z_k(0)|}{|z_n(0)|}\leq M_z$
for some $M_z> 0$, $n\in [N_1]$ and $k\in [N_1+N_2]\setminus [N_1]$.
\end{assumption}

\begin{remark}
Our proposed approach allows to approximate $\left\{y_n \right\}_{n=1}^{N_1}$ even if  both $\left\{c_n \right\}_{n=1}^{N_1}$ and $\left\{z_n(0) \right\}_{n=1}^{N_1}$ are unknown. However, knowledge of $\left\{y_n \right\}_{n=1}^{N_1}$ in \eqref{eq:yDef} does not allow to separate $\left\{c_n\right\}_{n=1}^{N_1}$ and $\left\{z_n(0)\right\}_{n=1}^{N_1}$ from their (corresponding) products, if both sets are unknown. This is an inherent ambiguity of the identification objective.
In Assumptions \ref{assum:coeff} and \ref{assum:incon}, we consider $\left\{c_n\right\}_{n=1}^{N_1}$ to be known and $\left\{z_n(0)\right\}_{n=1}^{N_1}$ to be unknown. Our approach is also valid for unknown $\left\{c_n\right\}_{n=1}^{N_1}$  and known $\left\{z_n(0)\right\}_{n=1}^{N_1}$.
\end{remark}

\begin{assumption}\label{assum:meas}
The system measurements are available \emph{only at a finite set of times} $\left\{t_k := k\Delta\right\}_{k=0}^{2N_1-1}$, with step-size $\Delta>0$.
\end{assumption}

Subject to Assumptions \ref{assum:coeff}-\ref{assum:meas}, the measurements \eqref{eq:Meas1} at the available times $\left\{t_k \right\}_{k=0}^{2N_1-1}$ can be presented as 
\begin{small}
\begin{equation}\label{eq:Meas2}
 y(t_k) = \underbrace{\sum_{n=1}^{N_1}y_ne^{-\lambda_n \Delta k }}_{y_{\text{main}}(t_k)}+\epsilon \underbrace{\sum_{n=N_1+1}^{N_1+N_2}y_ne^{-\lambda_n\Delta k}}_{y_{\text{tail}}(t_k)}, \ k=0,\dots,2N_1-1
\end{equation}
\end{small}
where 
\begin{small}
\begin{equation}\label{eq:yDef}
 y_n : = \begin{cases}
     c_nz_n(0), \quad n\in  [N_1]\\
     \tilde{c}_nz_n(0),\quad n\in [N_1+N_2]\setminus [N_1]
 \end{cases} 
\end{equation}
\end{small}
satisfy $\frac{|y_k|}{|y_n|}\leq M_c M_z=:M_y$  
for all $n\in [N_1], k\in [N_1+N_2]\setminus [N_1]$.  

\subsection{Identification objective and considered regime}
We use the measurements \eqref{eq:Meas2} to estimate $\left\{z_n(0),\lambda_n \right\}_{n=1}^{N_0}$, for some $N_0\leq N_1$. To define a recovery criteria, we introduce
\begin{small}
\begin{equation}\label{eq:ImplicitFunc}
\mathcal{F}\left(\left\{\hat{y}_n,\hat{\lambda}_n \right\}_{n=1}^{N_1}, \epsilon  \right) = \operatorname{col}\left\{\sum_{n=1}^{N_1}\hat{y}_ne^{-\hat{\lambda}_n\Delta k}- y(t_k)\right\}_{k=0}^{2N_1-1} ,
\end{equation}
\end{small}
which reflects the discrepancy between the measurements $\left\{y(\Delta k) \right\} $ and the ``virtual measurements'' $\left\{\sum_{n=1}^{N_1}\hat{y}_ne^{-\hat{\lambda}_n\Delta k} \right\} $ obtained from a candidate $\hat{P}:=\left\{\hat{y}_n,\hat{\lambda}_n \right\}$. Note that $y(t_k)$ in \eqref{eq:Meas2} depends on $\epsilon$.
\begin{definition}\label{def:epsApprox}
$\hat{P}$ is an \emph{$\epsilon$-approximation} of $ \left\{y_n,\lambda_n \right\}$ if $ \mathcal{F}\left(\hat{P};\epsilon  \right) =0$.
\end{definition}
The intuition behind Definition \ref{def:epsApprox} is as follows:
\begin{itemize}
\item Measurements \eqref{eq:Meas2} are split into  $y_{\text{main}}$ and $y_{\text{tail}}$. The former contains the desired $\left\{z_n(0),\lambda_n \right\}_{n=1}^{N_0}$, whereas the latter is an \emph{$\epsilon$-structured perturbation}, where $\epsilon>0$, since $c\in L^2(0,1)$ is not a perfect filter, i.e. $c\notin \operatorname{span}\left\{\psi_n \right\}_{n=1}^{N_1}$.
\item When $c$ is a perfect filter, $\epsilon=0$, and thus $\mathcal{F}\left(P;0  \right) =0$, where $P = \left\{y_n,\lambda_n \right\}$. In particular, the $\epsilon$-approximation coincides with the true parameter, namely $P=\hat{P}$.
\item Hence, we propose to seek an approximation  $\hat{P}$ which preserves the equality  $\mathcal{F}\left(\hat P;\epsilon  \right) =0$ for $\epsilon>0$. 
\end{itemize}
Given an $\epsilon$-approximation $\hat{P}$, Assumption \ref{assum:coeff} and expression \eqref{eq:yDef} allow us to derive estimates for the dominant $N_0\leq N_1$ projection coefficients of the initial condition $z(\cdot,0)$, $\left\{z_n(0) \right\}_{n=1}^{N_0}$, since $\hat{z}_n(0) = \frac{\hat{y}_n}{c_n},\quad n\in [N_0]$.

We  address the analytical problem of existence of $\hat{P} = \hat{P}(\epsilon)$. Specifically:

1) Given a small $\epsilon>0$, we show that there exists a unique $\hat{P}=\hat{P}(\epsilon)$ and we derive the first-order expansions of the reconstruction errors with respect to $\epsilon$:
\begin{small}
\begin{equation}\label{eq:EpsExpans}
\begin{array}{lll}
&\hspace{-3mm}\frac{e_{\lambda}(n)}{\epsilon} := \frac{\hat{\lambda}_n(\epsilon) -  \lambda_n}{\epsilon}  =  \mathcal{K}_{\lambda}(n) + o(1), \\
&\hspace{-3mm}\frac{e_{y}(n)}{\epsilon}:= \frac{\hat{y}_n(\epsilon) - y_n}{\epsilon} =  \mathcal{K}_{y}(n)+ o(1),\ n\in [N_0], \ \epsilon\to 0,
\end{array}
\end{equation}
\end{small}
where $\mathcal{K}_{\lambda}(n)$ and $\mathcal{K}_{y}(n)$ are the \emph{first-order condition numbers} for the recovery of $\hat{P}$, which describe how much the  $\epsilon$-structured perturbation is amplified when estimating the reconstruction errors $e_{\lambda}(n)$ and $e_{y}(n)$.

2) Assuming $N_2$ fixed, we derive  the asymptotics of $ \mathcal{K}_{\lambda}(n)$ and $\mathcal{K}_{y}(n)$ for $n\in [N_1]$ in the
\begin{equation}\label{eq:Regimes}
\begin{array}{lll}
&\underline{\text{Regime:}} \ N_1 \text{ fixed and } \Delta \to \infty
\end{array}
\end{equation}
in order to obtain insight into the behaviour of the reconstruction errors  $e_{\lambda}(n)$ and $e_{y}(n)$ for small $\epsilon$.

3) When computing $\hat{P}$, our numerical simulations show that the ESPRIT algorithm \cite{roy1989esprit} is first-order optimal, i.e., its first-order condition numbers exhibit the same asymptotic behaviour as $\mathcal{K}_{\lambda}(n)$ and $\mathcal{K}_{y}(n)$ in the regime \eqref{eq:Regimes}. 
\begin{remark}\label{Rem:NonlinearInverse}
The considered problem is highly challenging for two reasons. First, we assume that only  \emph{finitely many} measurements are available for the reconstruction procedure, for any triplet $(\Delta,N_1,N_2)$. Second, although  \eqref{eq:PDE} is a linear system, the task of recovering $\left\{y_n,\lambda_n \right\}_{n=1}^{N_0}$ from the measurements \eqref{eq:Meas2} is a \emph{nonlinear inverse problem}, as the measurements depend \emph{nonlinearly} on these parameters.
\end{remark}

\section{Identification Problem: First-Order Analysis}\label{Sec:FirstOrdAn}

Here we derive explicit expressions for the first-order condition numbers $\mathcal{K}_{\lambda}(n)$ and $\mathcal{K}_{y}(n)$ in \eqref{eq:EpsExpans}, and bound their asymptotic behavior in the regime \eqref{eq:Regimes}.

Only to keep the presentation simpler, we assume that $N_2=1$ (i.e., the sum $y_{\text{tail}}(t_k)$, $k\in  \left\{0 \right\} \cup [2N_1-1]$, in \eqref{eq:Meas2} contains a single term). The analysis and the conclusions of this section remain identical for an arbitrary fixed $N_2\in \mathbb{N}$.

Throughout the section, we use the notation
\begin{small}
\begin{equation}\label{eq:Phidef}
 \phi_n := e^{-\lambda_n \Delta},\  n\in [N_1+1].   
\end{equation}
\end{small}
The measurements in \eqref{eq:Meas2} are then rewritten as 
\begin{small}
\begin{equation}\label{eq:Meas2A}
 y(t_k) = \sum_{n=1}^{N_1}y_n \phi_n^k+\epsilon y_{N_1+1}\phi_{N_1+1}^k, \ k\in  \left\{0 \right\} \cup [2N_1-1].
\end{equation}
\end{small}

We now prove the existence of an $\epsilon$-approximation $\hat{P}(\epsilon)$ and derive closed-form expressions for $\mathcal{K}_{\lambda}(n)$ and $\mathcal{K}_{y}(n)$.

\begin{theorem}\label{Prop:IFT}
There exist $\epsilon_*>0$ and continuously differentiable functions $\hat{P}(\epsilon):=\left\{\hat{y}_n(\epsilon),\hat{\lambda}_n(\epsilon) \right\}$ such that $\hat{P}(0) = P$ and that, for all $|\epsilon|< \epsilon_*$, $ \mathcal{F}(\hat{P};\epsilon)=0 \iff \hat{P}=\hat{P}(\epsilon)$.
Furthermore, we have 
\begin{small}
\begin{equation}\label{eq:Deriv}
\begin{bmatrix}
\mathcal{K}_{y}(n) \\ \mathcal{K}_{\lambda}(n)  
\end{bmatrix}
 = y_{N_1+1} 
\begin{bmatrix}
 H_{\Phi,n}(\phi_{N_1+1})\\
-\frac{1}{\Delta y_{n} \phi_n } \tilde{H}_{\Phi,n}(\phi_{N_1+1})
\end{bmatrix}, \ n\in [N_1],
\end{equation}
\end{small}
where $\left\{H_{\Phi,n},\tilde{H}_{\Phi,n} \right\}_{n=1}^{N_1}$ are the Hermite interpolation basis  polynomials, given in \eqref{eq:Hermite}, associated with $\Phi=\left\{\phi_n\right\}_{n=1}^{N_1}$.
\end{theorem}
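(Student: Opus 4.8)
The plan is to treat the $2N_1$ scalar equations $\mathcal{F}(\hat{P};\epsilon)=0$ from \eqref{eq:ImplicitFunc} as an implicit system in the $2N_1$ unknowns $\hat{P}=\{\hat{y}_n,\hat{\lambda}_n\}_{n=1}^{N_1}$ and to apply the Implicit Function Theorem (IFT) at the base point $(\hat{P},\epsilon)=(P,0)$. First I would verify $\mathcal{F}(P;0)=0$: setting $\epsilon=0$ in \eqref{eq:Meas2A} removes the tail term, so $y(t_k)=\sum_{n=1}^{N_1}y_n\phi_n^k$ and the choice $\hat{y}_n=y_n$, $\hat{\lambda}_n=\lambda_n$ annihilates every coordinate. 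Writing $\hat\phi_n=e^{-\hat\lambda_n\Delta}$, I would then compute the Jacobian $J:=D_{\hat{P}}\mathcal{F}|_{(P,0)}$ columnwise: $\partial_{\hat{y}_m}\mathcal{F}_k=\phi_m^k$, while $\partial_{\hat\lambda_m}\hat\phi_m^k=-\Delta k\,\hat\phi_m^k$ gives $\partial_{\hat\lambda_m}\mathcal{F}_k=-\Delta y_m k\phi_m^k=(-\Delta y_m\phi_m)(k\phi_m^{k-1})$.

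The key observation is that $J$ is a confluent Vandermonde matrix on the nodes $\{\phi_n\}_{n=1}^{N_1}$, each of multiplicity two, right-multiplied by a diagonal scaling $\tilde D$ carrying the factor $1$ on each $\hat{y}_m$-column and $-\Delta y_m\phi_m$ on each $\hat{\lambda}_m$-column, i.e. $J=V\tilde D$ where the columns of $V$ are $\operatorname{col}\{\phi_m^k\}_k$ and $\operatorname{col}\{k\phi_m^{k-1}\}_k$. Because the eigenvalues $\{\lambda_n\}$ are simple (Section~\ref{Sec:Prelim}) and $t\mapsto e^{-t\Delta}$ is injective, the nodes $\phi_1,\dots,\phi_{N_1}$ are pairwise distinct, so $V$ is invertible; moreover $y_m\neq 0$ (Assumptions~\ref{assum:coeff} and \ref{assum:incon} via \eqref{eq:yDef}) and $\phi_m\neq 0$, so $\tilde D$ is invertible. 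Hence $J$ is invertible, and the IFT yields $\epsilon_*>0$ together with a unique $C^1$ branch $\hat{P}(\epsilon)$ with $\hat{P}(0)=P$ solving $\mathcal{F}(\hat{P};\epsilon)=0$ for $|\epsilon|<\epsilon_*$, giving the first assertion.

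For the condition numbers I would differentiate $\mathcal{F}(\hat{P}(\epsilon);\epsilon)\equiv 0$ at $\epsilon=0$, obtaining $\tfrac{d\hat{P}}{d\epsilon}=-J^{-1}\,\partial_\epsilon\mathcal{F}|_{(P,0)}$. From \eqref{eq:Meas2A} we read off $\partial_\epsilon\mathcal{F}_k=-y_{N_1+1}\phi_{N_1+1}^k$, so $\tfrac{d\hat{P}}{d\epsilon}=y_{N_1+1}\,J^{-1}\mathbf{v}$ with $\mathbf{v}=\operatorname{col}\{\phi_{N_1+1}^k\}_{k=0}^{2N_1-1}$. The crux is to identify $J^{-1}\mathbf{v}=\tilde D^{-1}V^{-1}\mathbf{v}$ with Hermite data: recognizing $V^\top$ as the evaluation map $\mathfrak{C}(p)\mapsto\operatorname{col}\{p(\phi_m),p'(\phi_m)\}_m$ shows $V^{-1}=\mathbb{H}_{N_1}(\Phi)$, the Hermite matrix of \eqref{eq:Hermite1}; since $\mathbf{v}=\operatorname{col}\{\zeta^j\}$ with $\zeta=\phi_{N_1+1}$, the defining property \eqref{eq:Hermite1} then gives $V^{-1}\mathbf{v}=\operatorname{col}\{H_{\Phi,n}(\phi_{N_1+1}),\tilde H_{\Phi,n}(\phi_{N_1+1})\}_{n=1}^{N_1}$. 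Applying $\tilde D^{-1}$, which rescales the $\hat\lambda_n$-entries by $-1/(\Delta y_n\phi_n)$, and the overall factor $y_{N_1+1}$ reproduces exactly \eqref{eq:Deriv}. I expect the main obstacle to be this final algebraic identification, namely pinning down the transpose and column-ordering conventions relating $J$, $V$ and $\mathbb{H}_{N_1}(\Phi)$ so that the inverse confluent Vandermonde applied to the monomial vector $\mathbf{v}$ cleanly reproduces the Hermite basis values, rather than the routine IFT bookkeeping.
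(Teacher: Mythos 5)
Your proposal is correct and follows essentially the same route as the paper: verify $\mathcal{F}(P;0)=0$, factor the Jacobian at $(P,0)$ as a confluent Vandermonde matrix (the inverse of the Hermite matrix $\mathbb{H}_{N_1}(\Phi)$) times an invertible diagonal scaling, invoke the implicit function theorem, and then differentiate $\mathcal{F}(\hat{P}(\epsilon);\epsilon)=0$ at $\epsilon=0$ and apply \eqref{eq:Hermite1} to the monomial vector $\operatorname{col}\{\phi_{N_1+1}^k\}$ to obtain \eqref{eq:Deriv}. The only difference is that you make explicit the duality argument identifying $V^{-1}$ with $\mathbb{H}_{N_1}(\Phi)$, which the paper states without derivation.
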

\begin{proof}
Consider the function $\mathcal{F}(\hat{P}, \epsilon)$ in \eqref{eq:ImplicitFunc} and recall that $P=\left\{y_n, \lambda_n \right\}$ are the true parameters. It can be seen that  $\mathcal{F}$ is differentiable in all variables $(\hat{P}, \epsilon)$. We denote by $\partial_{\hat{P}} \mathcal{F}(P,0)$ its Jacobian with respect to $\hat{P}$ evaluated at $\hat{P}=P$ and $\epsilon=0$. Then, $\partial_{\hat{P}} \mathcal{F}(P,0)= J(P,0) D(P,0)$, with
\begin{small}
\begin{equation}\label{eq:J1Def}
J (P,0) = \mathbb{H}_{N_1}\left(\Phi\right)^{-1}, \quad D(P,0) = \operatorname{diag}\left\{ \Big[\begin{smallmatrix}
    1 & 0\\
    0 & -\Delta y_n\phi_n
 \end{smallmatrix} \Big] \right\}_{n=1}^{N_1},
\end{equation}
\end{small}
where $\mathbb{H}_{N_1}\left(\Phi\right)$ is the Hermite matrix, given in \eqref{eq:Hermite1}, associated with $\Phi$. Since the eigenvalues $\left\{\lambda_n \right\}_{n=1}^{\infty}$ are simple, it follows from the uniqueness of Hermite interpolation that $\mathbb{H}_{N_1}\left(\Phi\right)$ is invertible. In view of Assumptions \ref{assum:coeff}-\ref{assum:incon} and of \eqref{eq:yDef}, we have that $y_n\neq 0,\ n\in [N_1]$, whence  $\operatorname{det}(\partial_{\hat{P}} \mathcal{F}(P,0))\neq 0$. The implicit function theorem \cite{spivak2018calculus} guarantees that there exist $\epsilon_*>0$ and continuously differentiable functions $\hat{P}(\epsilon)$ such that $\hat{P}(0) = P$ and  $ \mathcal{F}(\hat{P};\epsilon)=0 \iff \hat{P}=\hat{P}(\epsilon)$ for all $|\epsilon| <\epsilon_*$. Differentiating $\mathcal{F}(\hat{P}(\epsilon),\epsilon)=0$ with respect to $\epsilon$ and substituting $\epsilon=0$, we obtain
\begin{small}
\begin{equation}\label{eq:Deriv1}
\begin{array}{lll}
&\hspace{-5mm} \operatorname{col}\left\{
\begin{bmatrix}
\mathcal{K}_{y}(n) \\ \mathcal{K}_{\lambda}(n)  
\end{bmatrix}
\right\}_{n=1}^{N_1} =\partial_{\hat{P}} \mathcal{F}(P,0)^{-1}y_{N_1+1}\operatorname{col}\left\{\phi_{N_1+1}^k\right\}_{k=0}^{2N_1-1}.
\end{array}
\end{equation}
\end{small}
Recalling that $\partial_{\hat{P}} \mathcal{F}(P,0)= \mathbb{H}_{N_1}\left(\Phi\right)^{-1}   D(P,0)$ and that \eqref{eq:Hermite1} holds, we obtain the expression in \eqref{eq:Deriv}.
\end{proof}
Theorem \ref{Prop:IFT} allows us to analyze the asymptotic behaviour of the condition numbers $\left\{\mathcal{K}_{y}(n)\right\}_{n=1}^{N_1} $ and $\left\{\mathcal{K}_{\lambda}(n)\right\}_{n=1}^{N_1}$ in the regime \eqref{eq:Regimes} through the analysis of the  polynomials in \eqref{eq:Deriv}.
Given $\Delta>0$, $N_1\in \mathbb{N}$ and $n\in [N_1]$, set the functions
\begin{small}
\begin{equation}\label{eq:XiComp}  
\begin{array}{lll}
&\hspace{-3mm} \xi_1 = \prod_{j\neq n}  (\phi_{N_1+1}-\phi_j)^2, \ \ \xi_2 = \prod_{j=1}^{n-1}(\phi_n-\phi_j)^2\\
&\hspace{-3mm} \xi_3 = \prod_{j=n+1}^{N_1}(\phi_n-\phi_j)^2, \ \ \xi_4 = \sum_{k \neq n}|\phi_n-\phi_k|^{-1},
\end{array}
\end{equation}
\end{small}
where all summations/products range over indices in $[N_1]$, and we use the convention that $\prod_{j=l}^k b_j=1$ and $\sum_{j=l}^k b_j=0$ whenever $k<l$.
We omit the dependence of functions on $(n,N_1,\Delta)$ for simplicity of notation.

\begin{remark}\label{rem:Remark4}
Recalling the Lagrange polynomials given in \eqref{eq:Lagrange} we observe that  $L_{\Phi,n}^2(\phi_{N_1+1}) = \frac{\xi_1}{\xi_2 \xi_3}$ and $\left| L_{\Phi,n}'(\phi_n)\right| = \xi_4$.
\end{remark}

To prove our main result, we need several lemmas.
\begin{lemma}\label{lem:ComponentBounds0}
The functions in \eqref{eq:XiComp} can be written as
\begin{small}
\begin{equation*}
\begin{array}{lll}
&\xi_1 = e^{-2\Delta \sum_{j\neq n}\lambda_j -2\theta_1 }, \quad   \xi_2 = e^{-2\Delta \sum_{j=1}^{n-1}\lambda_j - 2\theta_2} ,\quad n> 1, \\
& \xi_3 = e^{-2\Delta (N_1-n)\lambda_n - 2\theta_3} , \ n< N_1,
\end{array}
\end{equation*}
\end{small}
where
\begin{small}
\begin{equation*}
\begin{array}{lll}
&\hspace{-3mm}\theta_1:= \sum_{j\neq n}-\log \left(1-e^{-\Delta (\lambda_{N_1+1}-\lambda_j)}  \right)>0,\\
&\hspace{-3mm} \theta_2:=\sum_{j=1}^{n-1}-\log \left(1-e^{-\Delta (\lambda_n-\lambda_j)} \right)>0,\quad n>1,\\
&\hspace{-3mm} \theta_3:=\sum_{j=n+1}^{N_1}-\log \left(1-e^{-\Delta (\lambda_j-\lambda_n)} \right)>0,\ n<N_1
\end{array}
\end{equation*}
\end{small}
satisfy the inequalities
\begin{small}
\begin{equation*}
\begin{array}{lll}
& \theta_1 \leq  \mathcal{J}_{0,\infty}(\Delta \upsilon N_1)+\log \left(1-e^{-\Delta \upsilon (N_1+1-n)(N_1+1)}  \right) ,\\
& \theta_1\geq \mathcal{J}_{1,2}(\Delta \Upsilon (2N_1+1))+\log \left(1-e^{-\Delta \Upsilon (N_1+1-n)(2N_1+1)}  \right),   \\
&\mathcal{J}_{1,2}(\Delta \Upsilon (2n-1)) \leq \theta_2  \leq  \mathcal{J}_{0,\infty}(\Delta \upsilon (n+1)),\\
& \mathcal{J}_{1,2}(\Delta \Upsilon(N_1+n+1)) \leq \theta_3 \leq \mathcal{J}_{0,\infty}(\Delta \upsilon(2n+1)),
\end{array}
\end{equation*}
\end{small}
where the positive constants $\upsilon$ and $\Upsilon$ are those given in Proposition \ref{prop:Differencebounds} and the function $\mathcal{J}_{w_1,w_2}$ is given in \eqref{eq:CalInteg}.
\end{lemma}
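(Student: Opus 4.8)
The plan is to prove the lemma in two stages: the exact algebraic rewriting of $\xi_1,\xi_2,\xi_3$, and then the estimation of the auxiliary quantities $\theta_1,\theta_2,\theta_3$. For the rewriting I would use $\phi_n=e^{-\lambda_n\Delta}$ together with the monotonicity of $\{\lambda_n\}$ to factor each difference appearing in \eqref{eq:XiComp}: whenever $\lambda_a>\lambda_b$ one has $\phi_a-\phi_b=-e^{-\lambda_b\Delta}(1-e^{-\Delta(\lambda_a-\lambda_b)})$, so each squared factor splits as a pure exponential $e^{-2\lambda_b\Delta}$ times $(1-e^{-\Delta(\lambda_a-\lambda_b)})^2$. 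Collecting the exponential prefactors across each product reproduces exactly the exponents $-2\Delta\sum_{j\neq n}\lambda_j$, $-2\Delta\sum_{j=1}^{n-1}\lambda_j$ and $-2\Delta(N_1-n)\lambda_n$ in the claim, while the remaining product of the $(1-e^{-\Delta(\cdots)})^2$ factors equals $e^{-2\theta_i}$ with $\theta_i$ the stated sums of $-\log(1-e^{-\Delta(\cdots)})$. The positivity $\theta_i>0$ is immediate, since each of the gaps $\lambda_{N_1+1}-\lambda_j$, $\lambda_n-\lambda_j$ (for $j<n$) and $\lambda_j-\lambda_n$ (for $j>n$) is strictly positive, so that $1-e^{-\Delta(\cdots)}\in(0,1)$ and every summand is positive.

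For the bounds on $\theta_2$ and $\theta_3$ I would recognise each as a sum of $\mathcal{Q}_\alpha(x)=-\log(1-e^{-\alpha x})$ evaluated at integer points. The two facts I rely on are that $\mathcal{Q}_\alpha$ is decreasing both in its argument and in $\alpha$ (the latter as noted in the proof of Proposition \ref{prop:Ical}). Invoking Proposition \ref{prop:Differencebounds} to replace the true gap $\lambda_m-\lambda_n$ by $\upsilon(m^2-n^2)$ or $\Upsilon(m^2-n^2)$, and writing $m^2-n^2=(m-n)(m+n)$, I would bound $m+n$ by the appropriate extremal value over the summation range, turning each summand into $\mathcal{Q}_\alpha(m-n)$ for a single $\alpha$. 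The upper bounds then follow from the integral comparison for positive decreasing functions, $\sum_{i\geq1}\mathcal{Q}_\alpha(i)\leq\int_0^\infty\mathcal{Q}_\alpha(x)\,\mathrm{d}x=\mathcal{J}_{0,\infty}(\alpha)$, which is finite by Proposition \ref{prop:Ical}; the lower bounds follow by discarding all summands but the largest and using $\mathcal{Q}_\alpha(1)\geq\int_1^2\mathcal{Q}_\alpha(x)\,\mathrm{d}x=\mathcal{J}_{1,2}(\alpha)$. Matching the extremal values of $m+n$ (namely $2n-1$ for $\theta_2$ and $2n+1$, $N_1+n+1$ for $\theta_3$) to the stated arguments of $\mathcal{J}$ is then a direct computation.

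The main obstacle is $\theta_1$, both because the summation index skips $j=n$ and because the reference eigenvalue $\lambda_{N_1+1}$ lies outside $[N_1]$. Here I would run the same per-term reduction on $(N_1+1)^2-j^2=(N_1+1-j)(N_1+1+j)$, bounding $N_1+1+j$ by its appropriate extremal value to obtain a single $\alpha$ (of order $\Delta\upsilon N_1$ for the upper estimate and $\Delta\Upsilon(2N_1+1)$ for the lower one), but then complete the sum over all $j\in[N_1]$ and subtract the bounding term of the excluded index $j=n$. The completed sum is controlled by $\mathcal{J}_{0,\infty}(\alpha)$ from above and, retaining a single term, by $\mathcal{J}_{1,2}(\alpha)$ from below, while the subtracted term $\mathcal{Q}_\alpha(N_1+1-n)=-\log(1-e^{-\alpha(N_1+1-n)})$ is precisely the additive $\log(1-e^{-\Delta(\cdots)})$ correction appearing in the statement. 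I expect the difficulty to be essentially bookkeeping: keeping the direction of each monotonicity inequality correct through the two substitutions (from $\lambda$-gaps to the $\upsilon,\Upsilon$ surrogates, and then to a common $\alpha$), and ensuring the sign of the excluded-index correction is consistent with an upper, respectively lower, bound.
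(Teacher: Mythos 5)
Your proposal is correct and follows essentially the same route as the paper: factor each squared difference as $e^{-2\lambda_b\Delta}\bigl(1-e^{-\Delta(\lambda_a-\lambda_b)}\bigr)^2$ to obtain the exact identities, invoke Proposition \ref{prop:Differencebounds} together with $(m^2-n^2)=(m-n)(m+n)$ and the extremal value of $m+n$ to reduce each $\theta_i$ to a monotone sum of $\mathcal{Q}_\alpha$ at integer points compared against $\mathcal{J}_{0,\infty}$ and $\mathcal{J}_{1,2}$, and handle the skipped index in $\theta_1$ by completing the sum and subtracting the surrogate term, which is exactly the paper's $\ell$-correction.
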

\begin{proof}
Due to space constraints, we consider $\xi_1$. The results for $\xi_2$ and $\xi_3$ are proved similarly. We have
\begin{small}
\begin{equation*}
\log(\xi_1) =  - 2\Delta \sum_{j\neq n}\lambda_j-2\theta_1.  
\end{equation*}
\end{small}
Employing \eqref{eq:EigDiff}, we obtain
\begin{small}
\begin{equation*}
\begin{array}{lll}
 &\theta_1 \geq  \sum_{j\neq n}-\log \left(1-e^{-\Delta \Upsilon ((N_1+1)^2-j^2)}  \right).
\end{array}
\end{equation*}
\end{small}
Let $\ell := \log \left(1-e^{-\Delta \Upsilon (N_1+1-n)(2N_1+1)}  \right)$. Then, we have
\begin{small}
\begin{equation*}
\begin{array}{lll}
&\hspace{-3mm}\sum_{j\neq n}-\log \left(1-e^{-\Delta \Upsilon ((N_1+1)^2-j^2)}  \right)-\ell \\
&\hspace{-3mm} \geq  \sum_{j=1}^{N_1}-\log \left(1-e^{-\Delta \Upsilon j(2N_1+1)}  \right)\geq \int_1^{N_1+1}\mathcal{Q}_{\Delta \Upsilon (2N_1+1)}(x)\mathrm{d}x  \\
&\hspace{-3mm}= \mathcal{J}_{1,N_1+1}(\Delta \Upsilon (2N_1+1)) \geq \mathcal{J}_{1,2}(\Delta \Upsilon (2N_1+1)),
\end{array}
\end{equation*}
\end{small}
where the first inequality holds because $(N_1+1)^2-j^2  \leq (N_1+1-j)(2N_1+1)$ and the second holds because the sum in the second row can be viewed as Riemannian sum of the positive and monotonically decreasing function $\mathcal{Q}_{\Delta \Upsilon (2N_1+1)}(x)$  over $x\in [1,N_1]$. Hence, the integral provides a lower bound for the sum. The upper bound is proved analogously, using $(N_1+1-j)(N_1+1)\leq (N_1+1)^2-j^2 $.
\end{proof}


Consider the Lagrange polynomials $\left\{L_{\Phi,n}\right\}_{n\in[N_1]}$ as in \eqref{eq:Lagrange}.
\begin{lemma}\label{Lem:LagrangePBound}
For $n\in [N_1]$, we have
\begin{small}
\begin{equation}\label{eq:LPhiSqBound}
 \hspace{-2mm} L_{\Phi,n}^2 (\phi_{N_1+1}) = \begin{cases}e^{-2\Delta \sum_{j=n+1}^{N_1}(\lambda_j-\lambda_n)+\Theta}, \ n<N_1,\\
e^{2\theta_2-2\theta_1}, \ n=N_1,
 \end{cases}
\end{equation}
\end{small}
where 
\begin{small}
\begin{equation}\label{eq:ThetaDef}
\Theta = \begin{cases} -2\left(\theta_1-\theta_2-\theta_3\right), \quad n>1,\\
-2\left(\theta_1 -\theta_3 \right), \quad n=1.
\end{cases}
\end{equation}
\end{small}
Moreover, fixing $n\in [N_1-1]$, $\underline{\Delta}>0$ and denoting 
\begin{small}
\begin{equation*}
\sigma(n,N_1) :=  \frac{N_1(N_1+1)(2N_1+1)}{6} - \frac{n(n+1)(2n+1)}{6} -(N_1-n)n^2,
\end{equation*}
\end{small}
there exists a constant $M_{\phi}=M_{\phi}(\underline{\Delta})>0$ such that 
\begin{small}
\begin{equation}\label{eq:nFixedBound}
L_{\Phi,n}^2(\phi_{N_1+1}) \leq M_{\phi} e^{-2\Delta \sigma(n,N_1)} ,\quad \Delta\geq  \underline{\Delta}.
\end{equation}
\end{small}
\end{lemma}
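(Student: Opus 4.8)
The plan is to prove the two assertions in turn: the exact closed form \eqref{eq:LPhiSqBound}--\eqref{eq:ThetaDef} first, and then the uniform exponential bound \eqref{eq:nFixedBound}. Throughout I would lean entirely on Remark~\ref{rem:Remark4}, Lemma~\ref{lem:ComponentBounds0}, Proposition~\ref{prop:Differencebounds}, and Proposition~\ref{prop:Ical}, so that no new spectral estimate is required.

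\emph{Exact identity.} By Remark~\ref{rem:Remark4} I start from $L_{\Phi,n}^2(\phi_{N_1+1}) = \xi_1/(\xi_2\xi_3)$ and substitute the factorizations of $\xi_1,\xi_2,\xi_3$ supplied by Lemma~\ref{lem:ComponentBounds0}, collecting separately the terms linear in $\Delta$ and the $\theta$-terms. The key observation is that the $\Delta$-linear terms telescope: writing $\sum_{j\neq n}\lambda_j = \sum_{j=1}^{n-1}\lambda_j + \sum_{j=n+1}^{N_1}\lambda_j$, the contribution $-2\Delta\sum_{j\neq n}\lambda_j$ from $\xi_1$ cancels the $+2\Delta\sum_{j=1}^{n-1}\lambda_j$ coming from $\xi_2^{-1}$ and combines with the $+2\Delta(N_1-n)\lambda_n = +2\Delta\sum_{j=n+1}^{N_1}\lambda_n$ coming from $\xi_3^{-1}$ to leave exactly $-2\Delta\sum_{j=n+1}^{N_1}(\lambda_j-\lambda_n)$, while the remaining $\theta$-contributions assemble into $\Theta=-2(\theta_1-\theta_2-\theta_3)$. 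The boundary cases are then read off from the empty-product/empty-sum conventions in \eqref{eq:XiComp}: for $n=1$ the factor $\xi_2$ (hence $\theta_2$) is absent, giving $\Theta=-2(\theta_1-\theta_3)$; for $n=N_1$ the factor $\xi_3$ (hence $\theta_3$) is absent and the $\Delta$-linear part is empty, so the ratio $\xi_1/\xi_2$ collapses to the second branch $e^{2\theta_2-2\theta_1}$ of \eqref{eq:LPhiSqBound}.

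\emph{Uniform bound.} For $n\in[N_1-1]$ I would take the first branch of \eqref{eq:LPhiSqBound} and split the estimate into a dominant rate and a bounded correction. For the rate I use the purely combinatorial identity $\sum_{j=n+1}^{N_1}(j^2-n^2)=\sigma(n,N_1)$ (the standard sum-of-squares formula), so that Proposition~\ref{prop:Differencebounds} applied summand-by-summand, $\lambda_j-\lambda_n\ge \upsilon(j^2-n^2)$, gives $\sum_{j=n+1}^{N_1}(\lambda_j-\lambda_n)\ge \upsilon\,\sigma(n,N_1)$ and hence the exponential decay $e^{-2\Delta\upsilon\,\sigma(n,N_1)}$, i.e.\ the bound \eqref{eq:nFixedBound} with decay governed by $\sigma(n,N_1)$, the spectral-gap constant $\upsilon$ of \eqref{eq:EigDiff} setting the scale. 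For the correction I drop $\theta_1>0$ to obtain $\Theta\le 2\theta_2+2\theta_3$ (for $n=1$ the term $\theta_2$ is simply absent), and then invoke the Lemma~\ref{lem:ComponentBounds0} upper bounds $\theta_2\le \mathcal{J}_{0,\infty}(\Delta\upsilon(n+1))$ and $\theta_3\le \mathcal{J}_{0,\infty}(\Delta\upsilon(2n+1))$. Since $\mathcal{J}_{0,\infty}$ is decreasing in its argument by Proposition~\ref{prop:Ical} and those arguments grow linearly in $\Delta$, for every $\Delta\ge\underline{\Delta}$ both integrals are bounded by their values at $\underline{\Delta}$; setting $M_\phi(\underline{\Delta}):=\exp\left(2\mathcal{J}_{0,\infty}(\underline{\Delta}\upsilon(n+1))+2\mathcal{J}_{0,\infty}(\underline{\Delta}\upsilon(2n+1))\right)$ yields $e^{\Theta}\le M_\phi$, and multiplying by the rate estimate gives \eqref{eq:nFixedBound}.

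I expect the main subtlety to be the claimed \emph{independence of $M_\phi$ from $N_1$}: although the products defining $\xi_2,\xi_3$ acquire more factors as $N_1$ grows, the argument must keep $e^{\Theta}$ controlled uniformly in $N_1$. This is exactly why the estimate routes through the Lemma~\ref{lem:ComponentBounds0} bounds on $\theta_2,\theta_3$, whose arguments depend only on $n$ (and $\Delta$) and not on $N_1$; all of the $N_1$-dependence is thereby confined to the beneficial, decaying factor $e^{-2\Delta\upsilon\,\sigma(n,N_1)}$, so that a single $M_\phi(\underline{\Delta})$ works for all admissible $N_1$.
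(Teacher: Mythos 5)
Your proof is correct and follows essentially the same route as the paper: the exact identity via $L_{\Phi,n}^2(\phi_{N_1+1})=\xi_1/(\xi_2\xi_3)$ combined with the factorizations of Lemma~\ref{lem:ComponentBounds0} (with the telescoping of the $\Delta$-linear terms and the empty-product conventions handling $n=1$ and $n=N_1$), and the bound obtained from $\sum_{j=n+1}^{N_1}(\lambda_j-\lambda_n)\ge\upsilon\,\sigma(n,N_1)$ together with the $\mathcal{J}_{0,\infty}$-bounds on $\theta_2,\theta_3$ after dropping $\theta_1>0$ --- the only cosmetic difference being that the paper further uses $n+1\ge 2$ and $2n+1\ge 3$ to make $M_\phi$ independent of $n$ as well. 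Note that, exactly as in the paper's own proof, what your argument actually establishes is $L_{\Phi,n}^2(\phi_{N_1+1})\le M_\phi e^{-2\Delta\upsilon\,\sigma(n,N_1)}$, so the constant $\upsilon$ appears to have been dropped from the exponent in the printed statement of \eqref{eq:nFixedBound}.
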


\begin{proof}
The equality \eqref{eq:LPhiSqBound} follows from Lemma \ref{lem:ComponentBounds0} and the fact that  $L_{\Phi,n}^2 (\phi_{N_1+1})  = \frac{\xi_1}{\xi_2 \cdot \xi_3}$. Now let $n\in [N_1-1]$. In view of Lemma \ref{lem:ComponentBounds0}, $\Theta$ in \eqref{eq:ThetaDef} is uniformly bounded in $\Delta \in [\underline{\Delta},\infty)$. Moreover,
\begin{small}
\begin{equation}\label{eq:Sumbound}
\begin{array}{lll}
&\hspace{-6mm} -2\Delta \sum_{j=n+1}^{N_1}(\lambda_j-\lambda_n) \overset{\eqref{eq:EigDiff}}{\leq} -2\Delta \upsilon \sigma(n,N_1)\leq -2\Delta \upsilon (2N_1-1),
\end{array}
\end{equation}
\end{small}
whereas for $\Delta \geq \underline{\Delta}$, we employ Lemma \ref{lem:ComponentBounds0} and Proposition \ref{prop:Ical} to obtain
\begin{small}
\begin{equation}\label{eq:Thetabound}
e^{\Theta}\leq e^{2\left(\mathcal{J}_{0,\infty}(2\upsilon \underline{\Delta})+\mathcal{J}_{0,\infty}(3\upsilon \underline{\Delta}) \right)}=:M_{\phi}
\end{equation}
\end{small}
for $n\in [N_1-1]$. \eqref{eq:nFixedBound} follows from \eqref{eq:LPhiSqBound},\eqref{eq:Sumbound} and \eqref{eq:Thetabound}.
\end{proof}
\begin{prop}\label{Prop:Z4bound}
The term $\xi_4$ in \eqref{eq:XiComp} satisfies
\begin{small}
\begin{equation}\label{eq:xi4bound}
\xi_4 \leq M_{\xi} \frac{e^{\Delta \lambda_{N_1}}}{\Delta}
\end{equation}
\end{small}
for some $M_{\xi}>0$ independent of $\Delta>0$.
\end{prop}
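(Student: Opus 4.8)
The plan is to bound $\xi_4$ summand by summand, pulling the factor $e^{\Delta\lambda_{N_1}}/\Delta$ out of each term and then adding up the finitely many constants that remain. First I would factor each difference as $|\phi_n-\phi_k| = e^{-\mu_k\Delta}(1-e^{-d_k\Delta})$, where $\mu_k:=\min(\lambda_n,\lambda_k)$ and $d_k:=|\lambda_n-\lambda_k|>0$ (the latter strictly positive because the eigenvalues are simple), so that
\[
\frac{1}{|\phi_n-\phi_k|}=\frac{e^{\mu_k\Delta}}{1-e^{-d_k\Delta}},\qquad k\in[N_1],\ k\neq n.
\]
The crucial observation is that, since $k\neq n$ while both indices lie in $[N_1]$, at least one of $\lambda_n,\lambda_k$ is strictly smaller than $\lambda_{N_1}$; hence $\mu_k<\lambda_{N_1}$ and the gap $\delta_k:=\lambda_{N_1}-\mu_k$ is strictly positive for every $k\neq n$.

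Writing $e^{\mu_k\Delta}=e^{\lambda_{N_1}\Delta}e^{-\delta_k\Delta}$, it then suffices to show that, for each $k$, the function $g_k(\Delta):=\Delta e^{-\delta_k\Delta}/(1-e^{-d_k\Delta})$ is uniformly bounded on $(0,\infty)$. Granting this, one has $|\phi_n-\phi_k|^{-1}\le(\sup_{\Delta>0}g_k(\Delta))\,e^{\lambda_{N_1}\Delta}/\Delta$, and summing over the $N_1-1$ indices $k\neq n$ yields $\xi_4\le M_{\xi}\,e^{\Delta\lambda_{N_1}}/\Delta$ with $M_{\xi}=\sum_{k\neq n}\sup_{\Delta>0}g_k(\Delta)$, which is manifestly independent of $\Delta$.

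To see that $g_k$ is bounded I would examine its two endpoints: as $\Delta\to0^+$ we have $1-e^{-d_k\Delta}\sim d_k\Delta$ and $e^{-\delta_k\Delta}\to1$, so $g_k(\Delta)\to1/d_k$, while as $\Delta\to\infty$ the denominator tends to $1$ and $\Delta e^{-\delta_k\Delta}\to0$; since $g_k$ is continuous on $(0,\infty)$ with finite limits at both ends, its supremum is finite. A fully explicit constant is also available: the elementary inequality $1+x\le e^{x}$ gives $1-e^{-d_k\Delta}\ge d_k\Delta/(1+d_k\Delta)$, whence $g_k(\Delta)\le(1+1/(d_k\Delta))\,\Delta e^{-\delta_k\Delta}\le 1/(e\delta_k)+1/d_k$, using $\sup_{\Delta>0}\Delta e^{-\delta_k\Delta}=1/(e\delta_k)$; this produces $M_{\xi}=\sum_{k\neq n}(1/d_k+1/(e\delta_k))$.

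The one genuine subtlety --- and the step I expect to require the most care --- is resisting the naive estimate $e^{\mu_k\Delta}\le e^{\lambda_{N_1}\Delta}$. That bound discards the decay carried by $\delta_k$ and leaves $\sum_{k\neq n}(1-e^{-d_k\Delta})^{-1}$, which tends to $N_1-1$ rather than to $0$ as $\Delta\to\infty$ and therefore cannot absorb the $1/\Delta$ factor. Retaining the strict gap $\delta_k>0$ is precisely what simultaneously delivers the $1/\Delta$ growth for small $\Delta$ (through $1-e^{-d_k\Delta}\approx d_k\Delta$) and the exponential control for large $\Delta$ (through $\Delta e^{-\delta_k\Delta}\to0$). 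If more quantitative constants were wanted, $d_k$ and $\delta_k$ could be bounded below using $\upsilon,\Upsilon$ from Proposition \ref{prop:Differencebounds}, but this is unnecessary for the mere existence of $M_{\xi}$.
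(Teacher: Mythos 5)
Your proof is correct. The paper arrives at essentially the same per-term estimate by a different technical route: it splits $\xi_4$ into the sub-sums over $k<n$ and $k>n$, applies the mean value theorem to $x\mapsto e^{-\Delta x}$ with the derivative evaluated at the larger of the two eigenvalues --- which yields $|\phi_n-\phi_k|^{-1}\le e^{\Delta\max(\lambda_n,\lambda_k)}/\bigl(\Delta\,|\lambda_n-\lambda_k|\bigr)$, the same inequality your factorization gives via $1-e^{-x}\ge x e^{-x}$ --- and then invokes the gap bounds $\lambda_m-\lambda_n\ge\upsilon(m^2-n^2)$ of Proposition~\ref{prop:Differencebounds} together with an integral comparison to control $\sum_{k\neq n}|n^2-k^2|^{-1}$ by $O(\log n/n)$. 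Your exact factorization $|\phi_n-\phi_k|=e^{-\mu_k\Delta}\bigl(1-e^{-d_k\Delta}\bigr)$ and the reduction to the boundedness of the single-variable function $g_k$ is more elementary and self-contained (it never needs Proposition~\ref{prop:Differencebounds}), and your identification of the strict gap $\delta_k=\lambda_{N_1}-\mu_k>0$ as the mechanism that simultaneously produces the $1/\Delta$ factor near $\Delta=0$ and the decay for large $\Delta$ is exactly the right observation. What the paper's extra work buys is a constant $M_\xi$ that is uniform not only in $\Delta$ but also in $n$ and $N_1$; your $M_\xi=\sum_{k\neq n}\bigl(1/d_k+1/(e\delta_k)\bigr)$ depends on $n$ and $N_1$, which fully suffices for the proposition as stated (only $\Delta$-independence is claimed) and for its use in Theorem~\ref{Thm:FirstOrdCond}, where $N_1$ is fixed, but would require the sharper bookkeeping via $\upsilon$ and the integral comparison if one ever wanted estimates uniform as $N_1\to\infty$.
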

\begin{proof}
We write $\xi_4 = \xi_{4,1}+\xi_{4,2}$, where 
\begin{small}
\begin{equation*}
\begin{array}{lll}
&\xi_{4,1} = \sum_{k\in [n-1]}\frac{1}{|\phi_n-\phi_k|} \mbox{ and } \xi_{4,2}  =\sum_{k=n+1}^{N_1}\frac{1}{|\phi_n-\phi_k|}.
\end{array}
\end{equation*}
\end{small}
For $\xi_{4,1}$ with $n>1$, we have
\begin{small}
\begin{equation}\label{eq:Z4Bound}
\begin{array}{lll}    
&\hspace{-3mm}\xi_{4,1} \leq \frac{e^{\Delta \lambda_n}}{\Delta}\sum_{k=1}^{n-1}\frac{1}{\lambda_n-\lambda_k} {\leq }\frac{e^{\Delta \lambda_n}}{\Delta \upsilon} \sum_{k=1}^{n-1}\frac{1}{n^2-k^2}\leq \frac{e^{\Delta \lambda_{n}}}{\Delta \upsilon}\frac{\ln(2n)}{2n }. 
\end{array}
\end{equation}
\end{small}
where the first inequality follows from the application of Lagrange's theorem with the derivative computed at $\lambda_n$, the second follows from \eqref{eq:EigDiff}. The third inequality follows from comparison with the integral of the positive and monotonically increasing function $x\mapsto (n^2-x^2)^{-1}$ on $x\in[1,n-1]$. Analogously, for $n<N_1$ we obtain
\begin{small}
\begin{equation}\label{eq:Z5Bound}
\begin{array}{lll}
 &\hspace{-4mm}\xi_{4,2}\leq \Delta^{-1}\sum_{k=n+1}^{N_1}\frac{e^{\Delta \lambda_k}}{\lambda_k-\lambda_n}\overset{\eqref{eq:EigDiff}}{\leq}\frac{e^{\Delta \lambda_{N_1}}}{\Delta \upsilon}\sum_{k=n+1}^{N_1}\frac{1}{k^2-n^2}\leq \frac{e^{\Delta \lambda_{N_1}}}{\Delta \upsilon}\frac{1+\ln(2n+1)}{2n}.
\end{array}
\end{equation}
\end{small}
The result follows from \eqref{eq:Z4Bound}, \eqref{eq:Z5Bound} since $\frac{\ln(2n)}{2n}$ and $\frac{1+\ln(2n+1)}{2n}$ are bounded sequences.
\end{proof}
We can now establish the asymptotic behaviour of the condition numbers $ \mathcal{K}_{\lambda}(n)$ and $\mathcal{K}_{y}(n)$ in the regime \eqref{eq:Regimes}.
\begin{theorem}\label{Thm:FirstOrdCond}
Recall the first-order condition numbers $\mathcal{K}_{\lambda}(n)$ and $\mathcal{K}_{y}(n)$ in \eqref{eq:EpsExpans}. Let $n\in \mathbb{N}$ and $\rho>0$. There exists $N_1^*(\rho)\in \mathbb{N}$ with $n<N_1^*(\rho)$ such that, for all $N_1>N_1^*(\rho)$,
\begin{small}
\begin{equation}\label{eq:AsympKappa}
    \begin{aligned}
    &\left|\mathcal{K}_{y}(n)\right| \leq \zeta_y(n,N_1,y_{N_1+1})\Delta^{-1}e^{-\rho \Delta},\\
    &\left|\mathcal{K}_{\lambda}(n)\right| \leq \zeta_{\lambda}\Delta^{-1}e^{-\rho \Delta},
    \end{aligned}
    \quad \Delta \to \infty
\end{equation}
\end{small}
with $\zeta_y(\lambda_{N_1},y_{N_1+1})>0\;,\zeta_{\lambda}>0$ independent of $\Delta$.
\end{theorem}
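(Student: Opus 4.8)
The plan is to start from the closed-form expression \eqref{eq:Deriv} for the condition numbers furnished by Theorem \ref{Prop:IFT}, and to reduce everything to the component estimates already assembled in Lemmas \ref{lem:ComponentBounds0}--\ref{Lem:LagrangePBound} and Proposition \ref{Prop:Z4bound}. Expanding the Hermite basis via \eqref{eq:Hermite} and using Remark \ref{rem:Remark4},
\begin{small}
\begin{equation*}
\begin{array}{lll}
&H_{\Phi,n}(\phi_{N_1+1}) = \bigl[1-2(\phi_{N_1+1}-\phi_n)L_{\Phi,n}'(\phi_n)\bigr]L_{\Phi,n}^2(\phi_{N_1+1}),\\
&\tilde{H}_{\Phi,n}(\phi_{N_1+1}) = (\phi_{N_1+1}-\phi_n)L_{\Phi,n}^2(\phi_{N_1+1}),
\end{array}
\end{equation*}
\end{small}
so the whole argument reduces to controlling three quantities: the squared Lagrange value $L_{\Phi,n}^2(\phi_{N_1+1})$, the gap $|\phi_{N_1+1}-\phi_n|$, and the derivative $|L_{\Phi,n}'(\phi_n)|=\xi_4$.

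Two elementary facts drive the estimates. Since $\lambda_{N_1+1}>\lambda_n$ we have $0<\phi_{N_1+1}<\phi_n$, whence $|\phi_{N_1+1}-\phi_n|\le\phi_n=e^{-\lambda_n\Delta}$ and $|\phi_{N_1+1}-\phi_n|/\phi_n\le 1$. For $\mathcal{K}_\lambda(n)$ this already suffices: combining the ratio bound with \eqref{eq:nFixedBound},
\begin{small}
\begin{equation*}
|\mathcal{K}_\lambda(n)| = \frac{|y_{N_1+1}|}{\Delta|y_n|\phi_n}\,|\phi_{N_1+1}-\phi_n|\,L_{\Phi,n}^2(\phi_{N_1+1}) \le \frac{|y_{N_1+1}|M_\phi}{|y_n|}\,\Delta^{-1}e^{-2\Delta\sigma(n,N_1)}.
\end{equation*}
\end{small}
Because $\sigma(n,N_1)$ grows like $N_1^3/3$ for fixed $n$, I would pick $N_1^*(\rho)$ so that $2\sigma(n,N_1)\ge\rho$ for all $N_1>N_1^*(\rho)$, giving $e^{-2\Delta\sigma}\le e^{-\rho\Delta}$ and the claimed bound with the $\Delta$-independent constant $\zeta_\lambda=|y_{N_1+1}|M_\phi/|y_n|$.

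The real work is in $\mathcal{K}_y(n)$, where the derivative term intervenes. Using $|L_{\Phi,n}'(\phi_n)|=\xi_4$ together with Proposition \ref{Prop:Z4bound}, $|\phi_{N_1+1}-\phi_n|\le e^{-\lambda_n\Delta}$, and \eqref{eq:nFixedBound},
\begin{small}
\begin{equation*}
|H_{\Phi,n}(\phi_{N_1+1})| \le \Bigl(1+2M_\xi\Delta^{-1}e^{\Delta(\lambda_{N_1}-\lambda_n)}\Bigr)M_\phi e^{-2\Delta\sigma(n,N_1)},
\end{equation*}
\end{small}
so $|\mathcal{K}_y(n)|$ splits into a term of order $e^{-2\Delta\sigma}$ and a dominant term of order $\Delta^{-1}e^{\Delta(\lambda_{N_1}-\lambda_n-2\sigma(n,N_1))}$. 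The main obstacle is exactly here: the factor $\xi_4$ carries an exponentially growing $e^{\Delta\lambda_{N_1}}$ that a priori could destroy the decay. I expect the resolution to be a growth-rate comparison: by Proposition \ref{prop:Differencebounds}, $\lambda_{N_1}-\lambda_n\le\Upsilon(N_1^2-n^2)$ grows only quadratically in $N_1$, whereas $\sigma(n,N_1)$ grows cubically, so $2\sigma(n,N_1)-(\lambda_{N_1}-\lambda_n)\to\infty$. Hence one may enlarge $N_1^*(\rho)$, if needed, to force $2\sigma(n,N_1)-(\lambda_{N_1}-\lambda_n)\ge\rho$ for $N_1>N_1^*(\rho)$, which bounds the dominant term by $2M_\xi M_\phi\Delta^{-1}e^{-\rho\Delta}$; the subordinate $e^{-2\Delta\sigma}$ term is then $o(\Delta^{-1}e^{-\rho\Delta})$ as $\Delta\to\infty$ (since $\Delta e^{-(2\sigma-\rho)\Delta}\to0$) and is absorbed into the constant. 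Multiplying by $|y_{N_1+1}|$ yields the first line of \eqref{eq:AsympKappa}, with $\zeta_y$ depending on $n,N_1,y_{N_1+1}$ through $M_\phi,M_\xi,|y_{N_1+1}|$ but not on $\Delta$. Recognizing that the cubic growth of $\sigma$ beats the quadratic growth of $\lambda_{N_1}$ is the technical heart; the rest is bookkeeping over the two regimes of the $\xi_4$-term and reassembly of previously proven bounds.
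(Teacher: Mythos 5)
Your proposal is correct and follows essentially the same route as the paper: the same reduction of \eqref{eq:Deriv} to the three quantities $L_{\Phi,n}^2(\phi_{N_1+1})$, $|\phi_{N_1+1}-\phi_n|$ and $\xi_4$ via Remark \ref{rem:Remark4}, the same invocation of \eqref{eq:nFixedBound} and \eqref{eq:xi4bound}, and the same key observation that the cubic growth of $\sigma(n,N_1)$ dominates the quadratic growth of $\lambda_{N_1}$, which is exactly how the paper chooses $N_1^*(\rho)$ in \eqref{eq:N1Cond}. The only cosmetic differences are in how the subordinate term is absorbed into the constant (the paper bounds $\max_{\Delta\ge\underline{\Delta}}\Delta e^{-\Delta\lambda_{N_1}}$, you argue it is $o(\Delta^{-1}e^{-\rho\Delta})$), which do not change the argument.
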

\begin{proof}
Given $\rho>0$, let $N_1^*(\rho)>n$ be large enough such that for all $N_1>N_1^*(\rho)$, $\lambda_{N_1}>0$ and
\begin{small}
\begin{equation}\label{eq:N1Cond}
-2 \upsilon \sigma(n,N_1)+\lambda_{N_1}<-\rho.   
\end{equation}
\end{small}
Note that such $N_1^*(\rho)>0$ exists since $\sigma(n,N_1)$ grows as $N_1^3$, whereas by \eqref{eq:EigBounds}, we have $\lambda_{N_1}=O(N_1^2)$ as $N_1\to \infty$. Consider first $\mathcal{K}_{y}(n)$. In view of \eqref{eq:Deriv}, we have
\begin{small}
\begin{equation*}
\begin{array}{lll}
\left|\mathcal{K}_{y}(n)\right| \overset{\text{Remark }  \ref{rem:Remark4}}{\leq} \left|y_{N_1+1} \right| \left(1+2e^{-\Delta \lambda_n} \xi_4\right)L_{\phi,n}^2(\phi_{N_1+1}).
\end{array}
\end{equation*}
\end{small}
Employing \eqref{eq:nFixedBound}, \eqref{eq:xi4bound} and \eqref{eq:N1Cond} with $\Delta\geq \underline{\Delta }>0$, 
\begin{small}
\begin{equation*}
\begin{array}{lll}
&\left|\mathcal{K}_{y}(n)\right|  \leq M_{\phi}\left|y_{N_1+1} \right| \left(\Delta+2 e^{-\Delta \lambda_n}M_{\xi} e^{\Delta \lambda_{N_1}}\right)\Delta^{-1}e^{-2\Delta \upsilon \sigma(n,N_1)} \\
&\leq M_{\phi}\left|y_{N_1+1} \right| \left(\Delta e^{-\Delta \lambda_{N_1}}+2 M_{\xi}e^{-\Delta \lambda_n}\right)\Delta^{-1}e^{-\rho \Delta}.
\end{array}
\end{equation*}
\end{small}
Since $\lambda_{N_1}>0$, $\max_{\Delta \geq \underline{\Delta }}\Delta e^{-\Delta \lambda_{N_1}}<\infty$, whence we can take 
\begin{small}
\begin{equation*}
\zeta_y(n,N_1,y_{N_1+1}): =M_{\phi}\left|y_{N_1+1} \right| \left(\max_{\Delta \geq \underline{\Delta}}\Delta e^{-\Delta \lambda_{N_1}}+2 M_{\xi}e^{-\Delta \lambda_n}\right)   
\end{equation*}
\end{small}
in \eqref{eq:AsympKappa}. Similarly, we have 
\begin{small}
\begin{equation*}
\begin{array}{lll}
\left|\mathcal{K}_{\lambda}(n)\right| &= \frac{\left|y_{N_1+1}\right|}{\Delta |y_n|}\frac{\left| \phi_{N_1+1}-\phi_{n}\right|}{\phi_n}L_{\Phi,n}^2\left(\phi_{N_1+1}\right).
\end{array}
\end{equation*}
\end{small}
By Assumptions \ref{assum:coeff}-\ref{assum:meas}, $\frac{\left|y_{N_1+1}\right|}{ |y_n|}\leq M_y$, whereas $\frac{\left| \phi_{N_1+1}-\phi_{n}\right|}{\phi_n}\leq 1$. Hence, from \eqref{eq:nFixedBound}, we again have
\begin{small}
\begin{equation*}
\left|\mathcal{K}_{\lambda}(n)\right| \leq M_y M_{\phi} \Delta^{-1}  e^{-2\Delta \sigma(n,N_1)} =: \zeta_{\lambda}\Delta^{-1}e^{-\rho \Delta},
\end{equation*}
\end{small}
which concludes the proof.
\end{proof}
\begin{remark}
Theorem \ref{Thm:FirstOrdCond} guarantees exponential decay of $\mathcal{K}_y(n)$ and $\mathcal{K}_{\lambda}(n)$ provided that $N_1>N_1^*(\rho)$. This condition is required only for $\mathcal{K}_y(n)$ and follows from the fact that the independent estimates of $L^2_{\Phi,n}(\phi_{N_1+1})$ and $|L_{\Phi,n}'(\phi_n)| = \xi_4$ in \eqref{eq:nFixedBound} and \eqref{eq:xi4bound}, respectively, are combined in bounding $\mathcal{K}_y(n)$, which leads to conservatism.  The numerical simulations in the next section show that the predicted exponential decay rate is obtained without choosing $N_1$ according to \eqref{eq:N1Cond}.
\end{remark}
\begin{remark}
Note that according to Theorem \ref{Thm:FirstOrdCond}, a larger value of the constant $N_1>N_1^*(\rho)$ actually allows one to obtain a larger constant $\rho$, thereby leading to faster decay of the condition numbers as $\Delta \to \infty$.
\end{remark}

\section{Numerical Simulations}\label{Sec:NumEx}

We provide numerical examples to validate our theory. All simulations are implemented in Wolfram Mathematica \cite{Mathematica}.

\subsection{Condition numbers}\label{sec:cond-numerics}
Consider the model \eqref{eq:Meas2} with $N_1=4$, $N_2=1$, $\lambda_n=n^2$, $y_n=1$ for all $n$ and $t_k=k\Delta$. We use \eqref{eq:Deriv1} to compute the condition numbers $\mathcal{K}_{\lambda}(n)$ and $\mathcal{K}_{y}(n), \ n\in [N_1]$. The results in Fig.~\ref{fig:kappa-N1-fixed} show exponential decay of the condition numbers, with the lowest order parameters ($n=1$) being the most stable, while the highest order ones ($n=N_1$) are the least stable. Differently from Theorem \ref{Thm:FirstOrdCond}, here we fix $N_1$ a priori, and still obtain an exponential decay of the condition numbers.

\subsection{ESPRIT algorithm}
\label{sec:esprit}
The ESPRIT algorithm \cite{roy1989esprit} is one of the best-performing methods for exponential fitting. It requires at least $2N_1$ equispaced samples of the signal $y(t)$ of the form \eqref{eq:Meas2}, and produces estimates of the parameters $\{\lambda_n,y_n\}_{n=1}^{N_1}$. It is known to provide exact solutions in the noiseless case (i.e. $\epsilon=0$), and performs close to optimal in the presence of noise, in the context of the so-called super-resolution problem in applied harmonic analysis \cite{batenkov2021a}. 

We apply ESPRIT to the sequence $\left\{y(\Delta k)\right\}_{k=0}^{2N_1-1}$, with the same setup as in Section \ref{sec:cond-numerics}. In Fig.~\ref{fig:esprit}, we see that the conditioning of the ESPRIT algorithm is  consistent with Theorem \ref{Thm:FirstOrdCond} and the computed condition numbers in Section~\ref{sec:cond-numerics}. We plot the rescaled errors (recall \eqref{eq:EpsExpans}),
\begin{small}
\begin{equation}\label{eq:rescaled-errors}
\mathcal{E}_{\lambda}(n) = \frac{\bigl|\widetilde{\lambda}_n-\lambda_n\bigr|}{\epsilon},\qquad \mathcal{E}_{y}(n) = \frac{\bigl|\widetilde{y}_n-y_n\bigr|}{\epsilon},
\end{equation}
\end{small}
where $\widetilde{\lambda}_n$ and $\widetilde{y}_n$ are the parameter values recovered by ESPRIT,
and, furthermore, the $\widetilde{\lambda}_n$'s have been index-matched to the true $\lambda_n$'s. Here $\epsilon=10^{-1}$, and the results were computed with $32$ decimal digits of precision.

\begin{figure}
    \begin{subfigure}{0.49\columnwidth}
        \includegraphics[width=\linewidth]{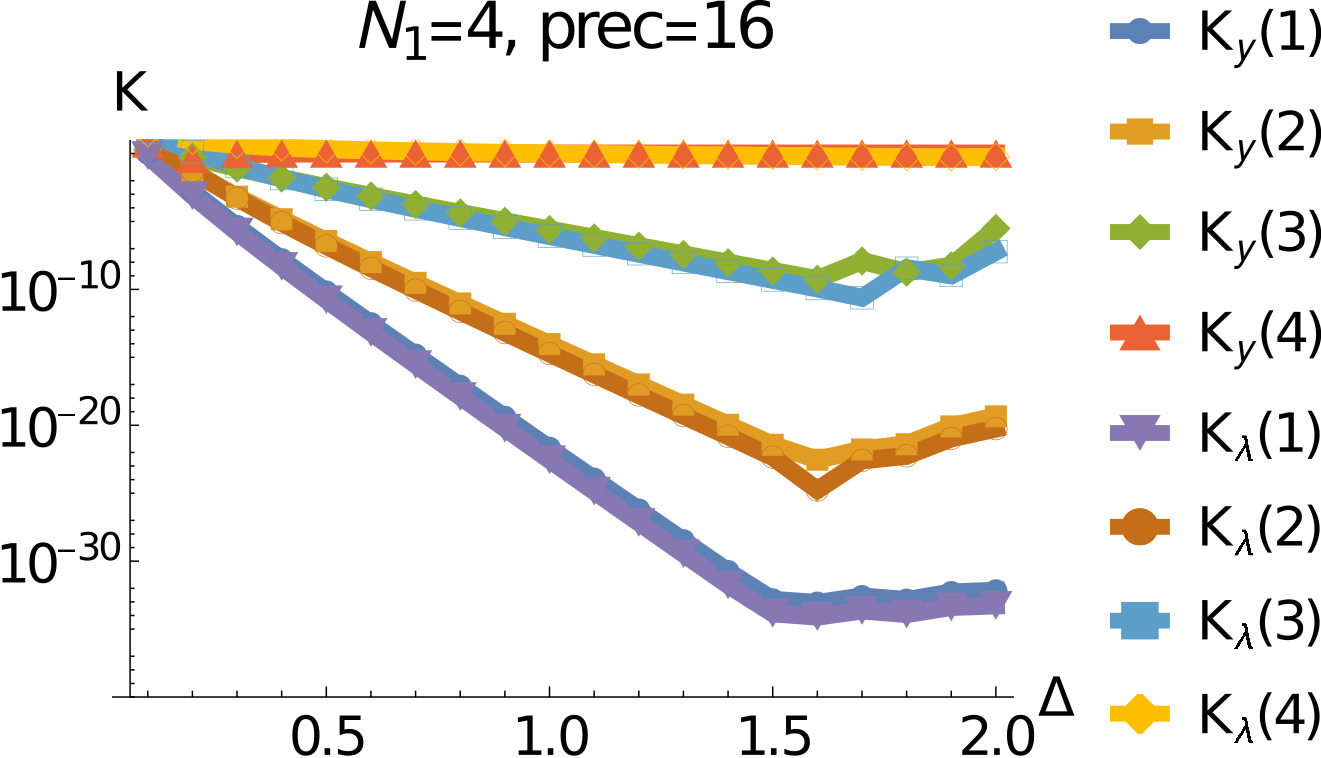}
        \caption{}
    \label{fig:kappa-N1-fixed}
    \end{subfigure}  
    \begin{subfigure}{0.49\columnwidth}
        \includegraphics[width=\linewidth]{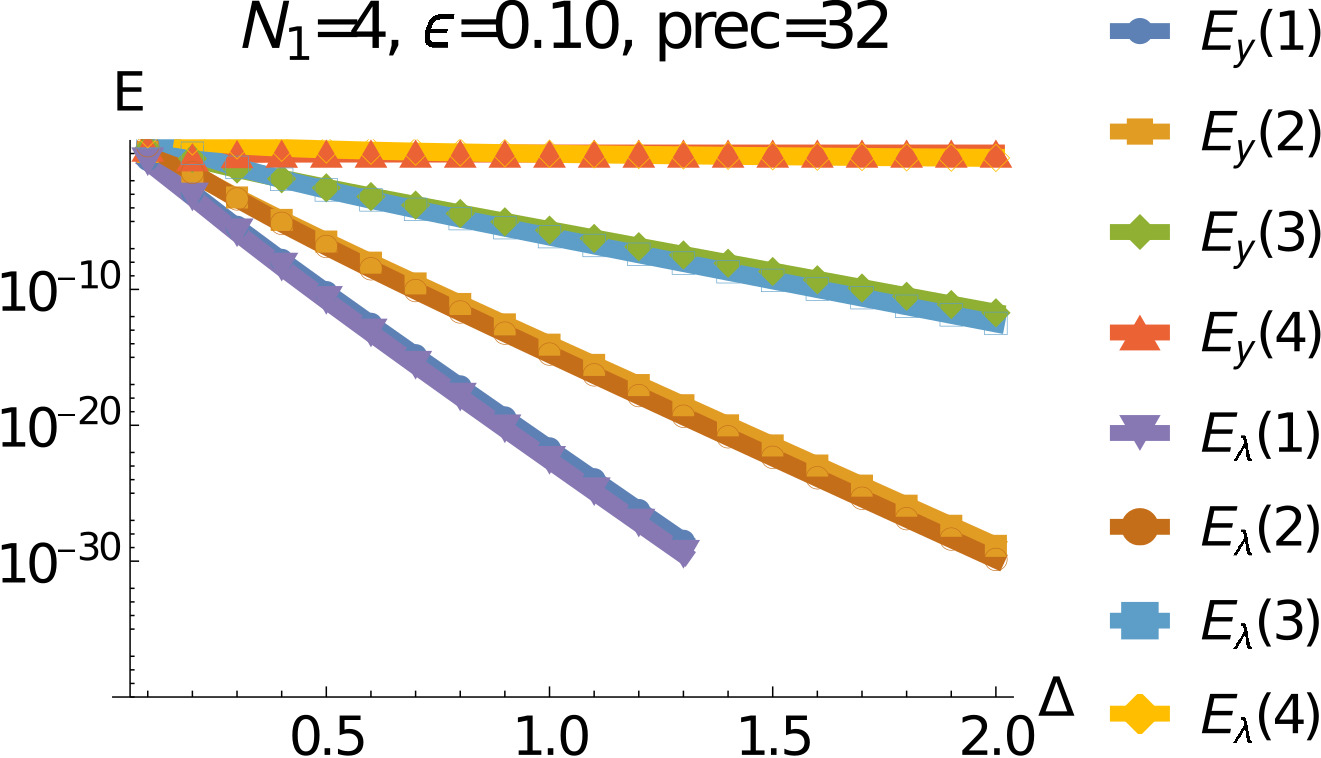}  
        \caption{}  
    \label{fig:esprit}
    \end{subfigure} 
    \caption{(\ref{fig:kappa-N1-fixed})  $\mathcal{K}_\lambda,\mathcal{K}_{y}$ for  $N_1=4$ and $\lambda_n=n^2$.  The asymptotics break down for large $\Delta$, due to inversion of badly conditioned matrices in finite precision computations (16 decimal digits). (\ref{fig:esprit}) ESPRIT algorithm conditioning (see  \eqref{eq:rescaled-errors}), applied to the sequence $\left\{y(\Delta k)\right\}_{k=0}^{2N_1-1}$ with $N_1=4$ and $\lambda_n=n^2$. Here we used 32 decimal digits of precision.}
\end{figure}

\subsection{PDE parameter identification}
\label{sec:pde-identification}

We test the complete procedure on a PDE identification problem. We consider the PDE \eqref{eq:PDE} with constant $p\equiv q \equiv 0.1$. The eigenvalues and eigenfunctions are explicitly given by $\lambda_n = n^2\pi^2-q$ and $\psi_n(x) = \sqrt{2} \sin(n\pi x)$. The initial condition is set to be $z_n(0)=(-1)^{n+1} \left( \sqrt{2} n^3\right)^{-1}$.
To solve the PDE, we use the method of lines for space discretization with $N_x=60$ collocation points and 4th order finite difference approximation, and the resulting ODE system is integrated for $t\in[0,2]$. The resulting solution and the initial condition are plotted in Fig.~\ref{fig:pde-solution}. Our implementation utilized the \texttt{NDSolve} library function.

Next, let $c(x)=\sum_{n=1}^{N_1} c_n \psi_n(x)+\epsilon \sum_{n=N_1+1}^{N_1+2} c_n \psi_n(x)$ where $\{c_n\}_{n=1}^{N_1+2}$ with $c_n\in[1,2]$ are randomly chosen, and $\epsilon=10^{-4}$. The measurements $y(t)$ in \eqref{eq:Meas} are computed using global adaptive quadrature as implemented in \texttt{NIntegrate} library function. Finally, $y(t)$ is sampled at $1025$ equispaced points in $[0,2]$, thus giving a minimal value of $\Delta_{\min}:=\frac{1}{512}$. The filter and the sampled measurements are shown in Fig.~\ref{fig:pde-measurements}.

\begin{figure}
    \begin{center}
        \includegraphics[width=\linewidth]{./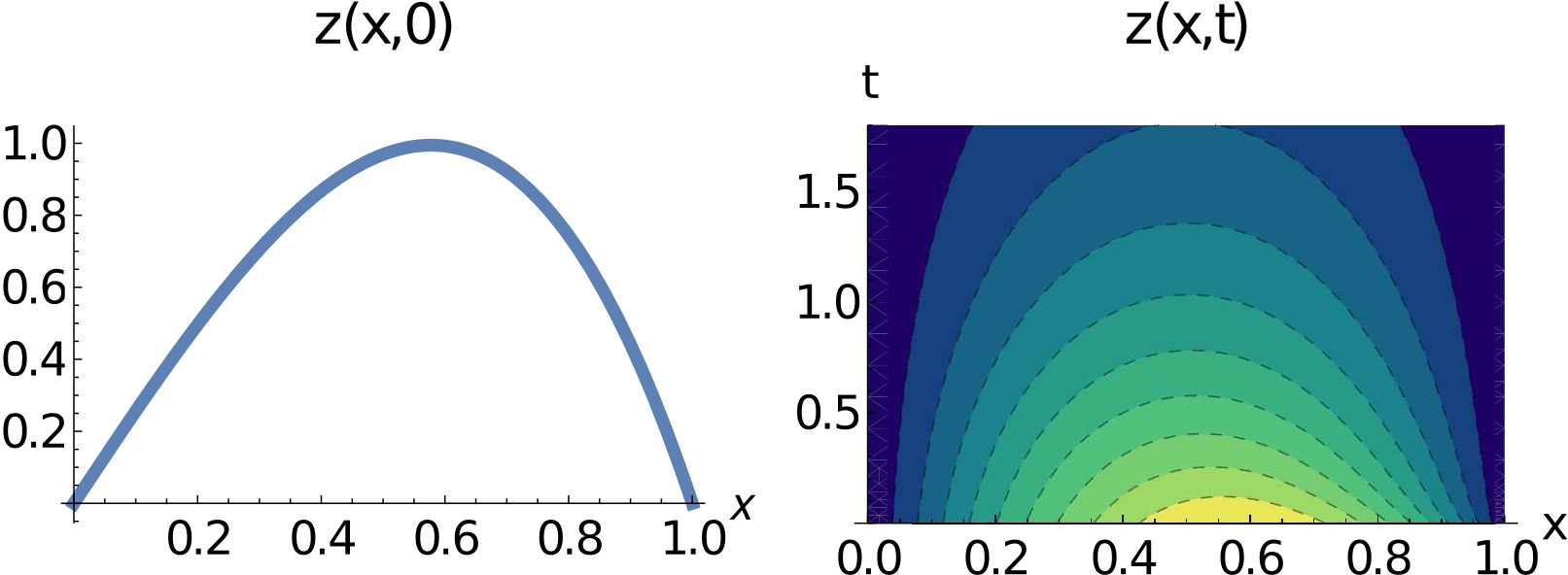}
    \end{center}
    \caption{Numerical solution of the PDE (right) with the specified initial condition (left).}
    \label{fig:pde-solution}
\end{figure}

\begin{figure}
    \begin{center}
        \includegraphics[width=\linewidth]{./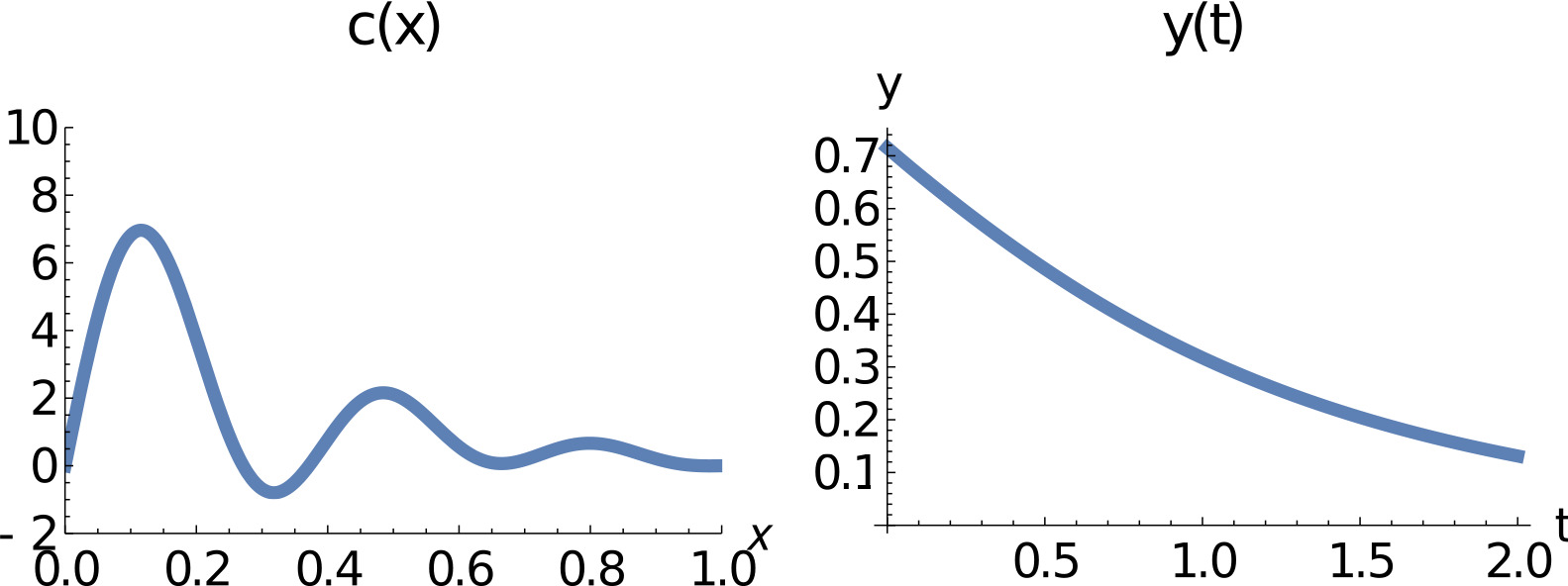}
    \end{center}
    \caption{The measurement filter $c(x),\;x\in[0,1]$ and the corresponding non-local measurement data $y(t),\;t\in[0,2]$.}
    \label{fig:pde-measurements}
\end{figure}

\begin{figure}[h!]
    \begin{subfigure}{0.49\columnwidth}
        \includegraphics[width=\linewidth]{./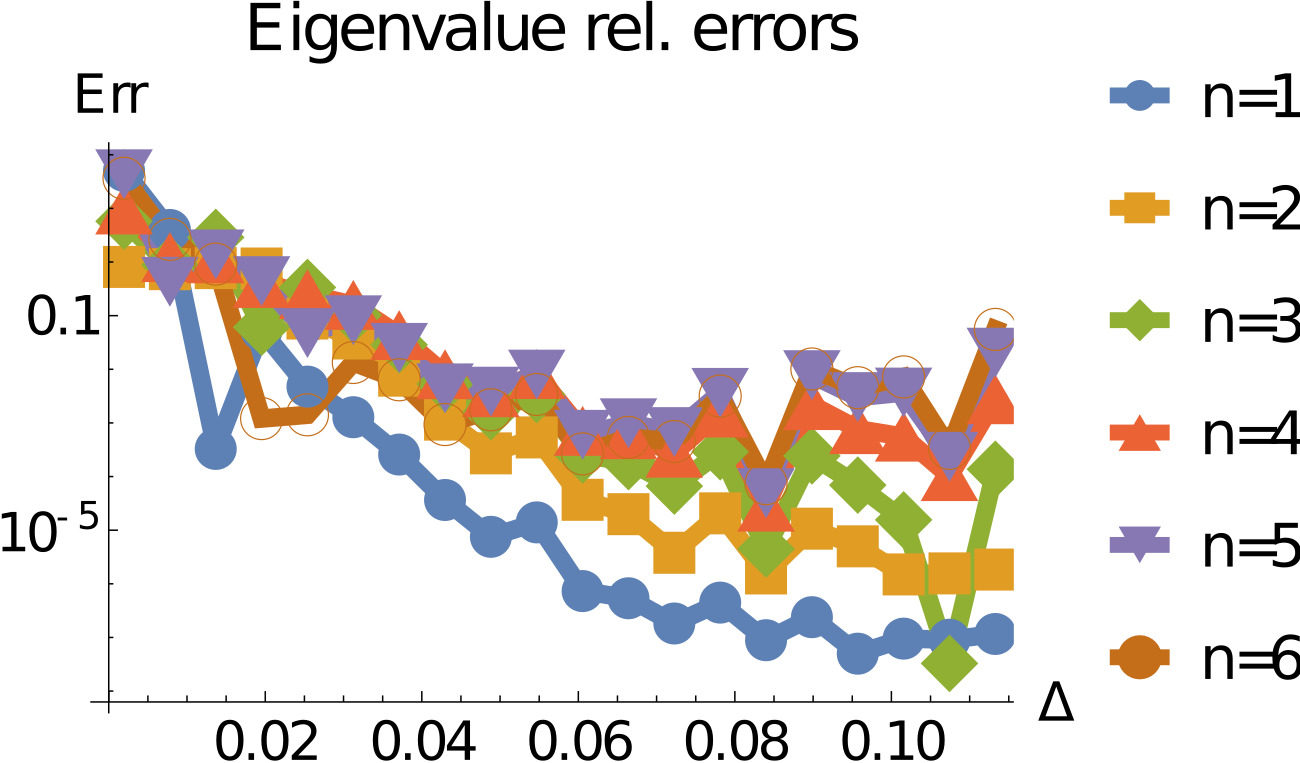}
        \caption{}
        \label{fig:pde-esprit-node-errors}
    \end{subfigure}
    \hfill
    \begin{subfigure}{0.49\columnwidth}
        \includegraphics[width=\linewidth]{./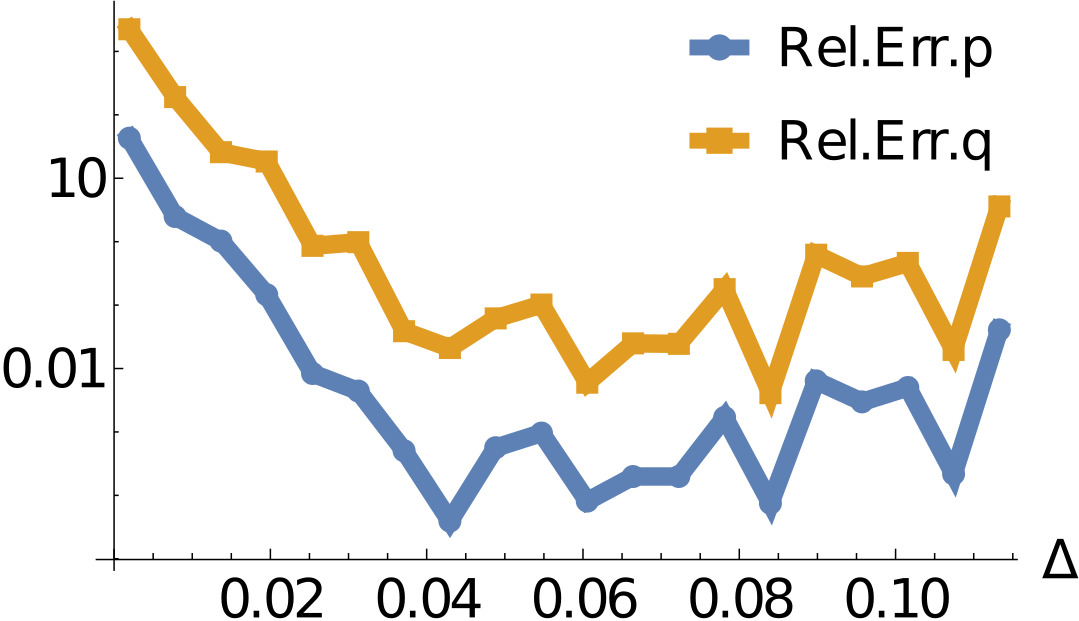}
        \caption{}
        \label{fig:pde-esprit-pq}
    \end{subfigure}
    \caption{ESPRIT recovery errors. (\ref{fig:pde-esprit-node-errors}) Recovery of PDE eigenvalues by ESPRIT, increasing $\Delta$. The relative errors in ${\lambda_n}$ for $n=1,\dots,N_1$ are plotted. (\ref{fig:pde-esprit-pq}) Recovery errors of $p,q$, estimated from $\{\widehat{\lambda}_n\}_{n=1}^{N_1}$ by a linear least squares fit.}
    \label{fig:esprit-errors}
\end{figure}

We apply the ESPRIT algorithm on  $\{y(k\Delta)\}_{k=0}^{2N_1-1}$ with varying $\Delta$. The relative errors in the recovered eigenvalues are plotted in Fig.~\ref{fig:pde-esprit-node-errors}. The deterioration of the error when $\Delta$ passes a certain threshold is consistent with our earlier observations due to the finite precision in the computations. Here, all computations are done with $100$ decimal digits of precision. 
Finally, we estimate $p,q$ from the recovered eigenvalues, using the relationship $\lambda_n=\pi^2 n^2 p -q$ by applying linear least squares regression to $\{\widehat{\lambda}_n\}_{n=1}^{N_1}$. The errors in the estimated parameters are plotted in Fig.~\ref{fig:pde-esprit-pq}.

\bibliographystyle{IEEEtran}
\bibliography{IEEEabrv,BibliographyKSE}

\end{document}